\theoremstyle{plain}
\newtheorem{theorem}{Theorem}[section]
\newtheorem{prop}[theorem]{Proposition}
\newtheorem{lem}[theorem]{Lemma}
\newtheorem{cor}[theorem]{Corollary}
\theoremstyle{definition}
\newtheorem{rem}[theorem]{Remark}
\def\R{{\mathbb R}}
\def\N{{\mathbb N}}
\def\D{{\mathcal{D}}}
\def\eps{\varepsilon}
\def\OU{\mathcal{L}}
\def\e{\mathrm{e}}
\def\d{\mathrm{d}}
\def\p{\mathrm p}
\def\div{\mathrm{div\,}}
\def\q{\mathrm{q}}
\numberwithin{equation}{section}
\title[On the Navier-Stokes equations with rotating effect ]
 {On the Navier-Stokes equations with rotating effect and prescribed outflow velocity}
\author[T. Hansel]{Tobias Hansel}
\address{%
International Research Training Group 1529\\ Technische Universit\"at Darmstadt\\ Schlossgartenstr. 7 \\ 64289 Darmstadt,
Germany}
\email{hansel@mathematik.tu-darmstadt.de}
\thanks{The author was supported by the DFG International Research Training Group 1529 \emph{Mathematical Fluid Dynamics} at TU Darmstadt.\\
To appear in J. Math. Fluid Mech.. Published online first. The
final publication is available at springerlink.com. }
\subjclass{Primary 35Q30; Secondary  76D03, 76D05}
\keywords{Navier-Stokes
flow, Oseen flow, rotating obstacle, non-autonomous PDE, evolution operators, Ornstein-Uhlenbeck
operator}
\begin{document}

\maketitle

\begin{abstract}
We consider the equations of Navier-Stokes modeling viscous fluid flow past a moving or rotating obstacle in $\R^d$ subject to a prescribed velocity condition at infinity. In contrast to previously known results, where the prescribed velocity vector is assumed to be parallel to the axis of rotation, in this paper we are interested in a general outflow velocity. In order to use $L^p$-techniques we introduce a new coordinate system, in which we obtain a non-autonomous partial differential equation with an unbounded drift term. We prove that the linearized problem in $\R^d$ is solved by an evolution
system on $L^p_{\sigma}(\mathbb R^d)$ for $1<p<\infty$. For this we use results about time-dependent Ornstein-Uhlenbeck operators. Finally, we prove, for $p\geq d$ and initial data $u_0\in L^p_{\sigma}(\R^d)$, the 
existence of a unique mild solution to the full Navier-Stokes system.
\end{abstract}


\section{Introduction}
The mathematical analysis of the Navier-Stokes flow past a rotating or moving obstacle has attracted quite some attention in recent 
years. It all started with the work of Borchers \cite{Borcher:1992}  in the framework of suitable weak solutions. Later Hishida \cite{Hishida:1999} constructed local mild solutions to the Navier-Stokes problem in the exterior of a rotating obstacle in the context of $L^2$ by using 
semigroup techniques (see also \cite{Hishida:2001}). This existence result was extended to the general $L^p$-theory by Geissert, Heck, 
Hieber \cite{Geissert/Heck/Hieber:2006} and Hishida, Shibata \cite{Hishida/Shibata:2009} showed that this solution is even a global one, 
provided the data are small enough. However, there are only a few partial results for the case when the fluid flow is subject to an 
additional outflow condition at infinity (hereby we mean a prescribed velocity of fluid at infinity). In fact, this situation was studied rather 
recently by Farwig \cite{Farwig:2005} and Shibata \cite{Shibata:2008} only for the special case when the outflow direction of the fluid is 
parallel to the axis of rotation of the obstacle. This assumption ensures -- after rewriting the problem on a fixed domain --  that the 
resulting equations are autonomous and thus can be treated e.g. by applying semigroup techniques. The purpose of this paper is to 
extend the existing results and to combine the rotating effect with a general outflow condition. For this purpose it is necessary to study the 
Navier-Stokes system perturbed by time-dependent and unbounded lower order terms, which is done here for the whole space case. 

To describe the situation more precisely, let $\mathcal O \subset \R^d$ be a compact obstacle with
smooth boundary and let $\Omega:=\R^d \setminus \mathcal O$ be the
exterior of the obstacle. We are interested in the case where the
obstacle undergoes a prescribed motion, particularly a rotation. 
So we let $M:[0,\infty) \to \R^{d\times d}$ be a
continuous matrix-valued function, such that $M(t)$ is
skew-symmetric for all $t>0$, i.e. $M(t)=-M^*(t)$,
and $M(t), M(s)$ commute\footnote{This condition can physically be interpreted by the fact that the axis of
rotation is fixed.} for all $t,s >0$. The exterior of the rotated obstacle at time $t>0$ is represented by $\Omega(t):=
U(t,0)\Omega$ where
\begin{equation}\label{eq:U(t,s)}
U(t,s) := \exp\Big( \int_s^t M(\tau) \d \tau\Big), \qquad t,s
\geq 0.
\end{equation}\normalsize
Since $M(t)$ is skew-symmetric for all $t>0$, the matrices $U(t,s)$ are orthogonal.  With a given velocity vector $v_\infty\in
\R^d\neq 0$, representing  the outflow velocity of the fluid, the Navier-Stokes equations on the time-dependent
domain $\Omega(t)$ with the usual no-slip boundary condition now
take the form
\begin{align}\label{eq:NS_2}
v_t-\Delta v +v\cdot \nabla v+\nabla \q&=0&\quad\quad\quad\mbox{in $\Omega(t)\times (0,\infty) $,}\notag\\
\div v&=0&\quad\quad\quad\mbox{in $\Omega(t)\times (0,\infty) $,}\notag\\
v(t,y)&=M(t)y&\quad\quad\quad\mbox{ on $\partial\Omega(t)\times (0,\infty) $,}\\
\lim_{|y|\to \infty} v(t,y)&=v_\infty\neq 0&\quad\quad\quad\mbox{ \mbox{for} $t\in(0,\infty) $,}\notag\\
v(0,y)&=u_0(y)&\quad\quad\quad\mbox{in $\Omega$}\vspace{0.3cm}\notag,
\end{align}
where $v$ and $\q$ are the unknown velocity field and the pressure of
the fluid, respectively. The disadvantage of this description is the
variability of the domain $\Omega(t)$, and the fact that the
equations do not fit into the $L^p$-setting, due the velocity
condition at infinity. By setting
\begin{equation}
x=U^*(t,0)y, \quad u(t,x)=U^*(t,0)
(v(t,y)-v_\infty), \quad \p(t,x)=\q(t,y),
\end{equation}
the above equations can be transformed back to the reference domain
$\Omega$ and the new velocity field $u$ vanishes at infinity.

\noindent We obtain the following system of equations:
\small
\begin{align}\label{eq:NS_3} \left.\begin{array}{l}
u_t- \Delta u - M(t)x \cdot \nabla u + M(t)u  \\
\quad+ U^*(t,0)v_{\infty}\cdot \nabla u +u\cdot \nabla u  +\nabla \p\end{array}\right\}&=0&\mbox{in   $\Omega \times (0,\infty)$,}\notag\\
\div u&=0&\quad\mbox{in $ \Omega \times (0,\infty) $,}\notag\\
u(t,x)&=M(t)x-U^*(t,0)v_\infty&\mbox{on $\partial\Omega \times (0,\infty)$},\\
\lim_{|x|\to \infty} u(t,x)&= 0&\mbox{\mbox{for} $t\in(0,\infty) $,}\notag\\
u(0,x)&=u_0(x)&\;\mbox{in $\Omega$}.\notag
\end{align}
\normalsize
The prize to pay for this transformation is that we obtain a non-autonomous partial differential equation with an unbounded drift term. 
Even if we assume that $M(t)\equiv M $ is independent of time, equation (\ref{eq:NS_3}) is still non-autonomous due to the time-
dependent first order term $U^*(t,0)v_{\infty}\cdot \nabla$. Only in the special situation where the velocity vector $v_{\infty}$ is parallel to 
the axis of rotation -- in this case $v_{\infty}$ is a fixed point under the transformation $U^*(t,0)$ -- the transformed equations remain 
autonomous. This shows that if one allows a general outflow condition, it is necessary to study a non-autonomous problem.

In the special case, where $M(t)x=\omega(t) \times x$ and
$\omega:[0,\infty) \rightarrow \R^3$ is the angular velocity of the
obstacle, Borchers \cite{Borcher:1992} constructed weak non-stationary solutions for the equations (\ref{eq:NS_3}). Later, Farwig \cite{Farwig:2005} studied the linearized
stationary problem with $\Omega=\R^d$ and he proved
$L^q$-estimates for the second derivative of the velocity field $u$
and for the first derivate of the pressure $\p$.  However, he only
considered the case, where $M(t)x = \omega \times x$ with
$\omega\in\R^3$ parallel to $v_{\infty}$. Recently, Shibata \cite{Shibata:2008}
proved, also for $M(t)x = \omega \times x$ with $\omega\in\R^3$
parallel to $v_{\infty}$, that the solution of the linearized
problem is governed by a strongly continuous semigroup on
$L^p_{\sigma}(\Omega)$, $1<p<\infty$, which is 
\emph{not analytic}.
His main result is actually the \emph{boundedness} of the semigroup (see also \cite{Hishida/Shibata:2009} for the case $v_{\infty}=0$).
By using Kato's iteration scheme (\cite{Kato:1984,Giga:1986})  this allows to prove the existence of a global solution to the full nonlinear problem for small initial
data. A time-dependent fundamental solution (Green's function) to problem (\ref{eq:NS_3}) was derived by Thomann, Guenther in \cite{Thomann/Guenther:2006} for the special case $M(t)x = \omega \times x$ with $\omega\in\R^3$
parallel to $v_{\infty}$.

Our approach to the non-autonomous equations (\ref{eq:NS_3}) is based on a linearization and on the family of modified time-dependent Stokes operators
$$
A(t)u :=\mathbb P \left( \Delta u + (M(t)x - U^*(t,0)v_{\infty}) \cdot \nabla u - M(t)u\right), \qquad
t>0,\vspace{0.2cm}
$$
where $\mathbb P$ denotes the Helmholtz-Leray projection from $L^p(\Omega)^d$ into $L^p_{\sigma}(\Omega)$, the space of all solenoidal vector fields in $L^p(\Omega)^d$ (see e.g.
\cite[Chapter III]{Galdi:1994}).
The main difficulty for treating operators of the above kind lies in
the fact that the coefficients of the drift term are unbounded and
thus the first order term cannot be consider as a ``small''
perturbation of the classical Stokes operator in unbounded domains. However, it
has been shown by Hieber, Sawada \cite{Hieber/Sawada:2005} for $\Omega=\R^d$ and by Geissert, Heck, Hieber \cite{Geissert/Heck/Hieber:2006} for exterior domains $\Omega$, that in the autonomous case, i.e. for fixed $t$, and for $v_{\infty}=0$, the operator $A(t)$ with an appropriate domain generates a strongly continuous
semigroup on $L^p_{\sigma}(\Omega)$, $1<p<\infty$, which is, however, not
analytic. The fact that the semigroup is
not analytic prevents us from employing standard generation results
for evolution systems of parabolic type mainly due to Tanabe \cite{Tanabe:1959, Tanabe:1960a, Tanabe:1960b} or Acquistapace, Terreni \cite{Acquistapace:1984,Acquistapace/Terreni:1986,Acquistapace/Terreni:1987} (see also \cite[Chapter
5]{Pazy:1983} or \cite[Chapter 6]{Tanabe:1997} for more information on this matter). Here lies one of the main difficulties. 
A first step in the study of the problem is to consider the whole space case rather than the physically more 
realistic situation of exterior domains. A solution to the whole space problem is not only interesting in its own right but also needed for using a cut-off technique to solve the exterior domain problem in a next step. Therefore, for the rest of this paper we study -- in a more general form -- the non-autonomous equations
\small
\begin{align} u_t-\Delta
u-\left(M(t)x + f(t)\right)\cdot\nabla u +M(t)u + u\cdot \nabla u + \nabla \p &=0&\quad\mbox{in $\R^d \times (0,\infty)$,}\notag\\
\div u&=0& \quad\mbox{in $\R^d \times (0,\infty)$,}\label{eq:NS_1}\\
u(0)&=u_0&\quad\mbox{in $\R^d$,}\vspace{0.3cm}\notag 
\end{align}\normalsize
where $M:[0,\infty) \rightarrow \mathcal\R^{d\times d}$, $f:[0,\infty) \rightarrow
\R^d$ are continuous functions and where we assume in addition\footnote{The physically reasonable condition that $M(t)$ is skew-symmetric for all $t>0$ is not needed for our main results and therefore not explicitly assumed for the rest of the paper unless otherwise stated.} that $M(t)M(s)=M(s)M(t)$ holds for all $t,s >0$. Here as usual,
 $u:\R^d \times (0,\infty)
\rightarrow \R^d$ and $\p: \R^d \times (0,\infty) \rightarrow \R$ denote the
unknown
velocity field and the pressure of the fluid respectively. By setting $f(t)=- U^*(t,0)v_{\infty}$ we are in the special situation of equation (\ref{eq:NS_3}).

This paper is organized as follows. In Section 2 we review and prove
results on time-dependent Ornstein-Uhlenbeck operators, studied recently by Da Prato, Lunardi
\cite{DaPrato/Lunardi:2007} and Geissert, Lunardi
\cite{Geissert/Lunardi:2007}. By using these results in Section 3 we
prove that the solution to the linearized problem is given by a strongly continuous evolution system on $L^p_{\sigma}(\R^d)$, $1<p<\infty$, and we derive
an explicit formula for the evolution operators, similar to the
representation formula known in the case of time-dependent
Ornstein-Uhlenbeck operators. Moreover, we prove $L^p$-$L^q$ as well
as gradient estimates for the evolution system. In Section 4 we return to the full Navier-Stokes problem (\ref{eq:NS_1}) and prove the
existence of a mild solution by adjusting Kato's iteration scheme to our situation.
\section{Time-dependent Ornstein-Uhlenbeck Operators}
In this section we assume that $M:\R \rightarrow \R^{d\times d}$ and $f:\R\rightarrow \R^d$
are continuous functions. Moreover, we define $\tilde{M}(t):= M(-t)$ for $t\in\R$ and denote by $U(t,s)$ and
$\tilde U(t,s)$ the solutions of the problems
\begin{equation}\label{eq:CP_U(t,s)}
\left\{
\begin{array}{rclll}
\frac{\partial}{\partial t}U(t,s) &=& M(t)U(t,s), &t,s\in\R,\\[0.1cm]
U(s,s) & = & I,
\end{array}\right.
\end{equation}
and
\begin{equation}
\left\{
\begin{array}{rclll}
\frac{\partial}{\partial t}\tilde{U}(t,s) &=& \tilde{M}(t)\tilde{U}(t,s), &t,s\in\R,\\[0.1cm]
\tilde{U}(s,s) & = & I,
\end{array}\right.
\end{equation}
respectively. 

Now we consider time-dependent Ornstein-Uhlenbeck
operators $\OU (t)$, formally defined on smooth functions $\varphi:\R^d\to\R$
by\vspace{0.05cm}
\begin{equation}
(\OU(t)\varphi)(x) = \Delta \varphi(x) + (M(t)x + f(t))\cdot
\nabla\varphi(x) , \quad t\in\R, \quad x\in \R^d,\vspace{0.05cm}
\end{equation}
and the associated non-autonomous forward Cauchy problem
\begin{equation}\label{eq:FCP}
\left\{
\begin{array}{rclll}
u_t(t,x)& = & \OU(t)u(t,x), & s< t,\; x \in \R^d, \\[0.1cm]
u(s,x) & = & \varphi(x), &  x\in\R^d,
\end{array}\right.
\end{equation}
where $s\in \R$ is fixed. A straightforward change of variables
allows to transform problem (\ref{eq:FCP}) into an equivalent
backward problem. More precisely, the function $(t,x)\mapsto u(t,x)$
is a classical solution to problem (\ref{eq:FCP}) if and only if the function 
$(t,x)\mapsto v(t,x):=u(-t,x)$ is a classical solution to the
backward problem\vspace{0.05cm}
\begin{equation}\label{eq:BCP}
\left\{
\begin{array}{rclll}
v_t(t,x) + \tilde{\OU}(t)v(t,x) &=& 0, & t< -s,\; x \in \R^d, \\[0.1cm]
v(-s,x) & = & \varphi(x), & x\in\R^d,
\end{array}\right.\vspace{0.05cm}
\end{equation}
where $\tilde{\OU}(t) := \OU(-t)$. Such a backward problem was
considered by Da Prato, Lunardi \cite{DaPrato/Lunardi:2007} and
Geissert, Lunardi \cite{Geissert/Lunardi:2007}, since their main motivation came from stochastics.
In our case, with
the application to problem (\ref{eq:NS_1}) in mind, it is more
convenient to work with the forward problem. The following
proposition follows, via the transformation mentioned above,
directly from the analogous result for the backward equation
(\ref{eq:BCP}) proved in \cite[Proposition
2.1]{DaPrato/Lunardi:2007}.
\begin{prop}\label{prop:OU_solution_formula}
Let $\varphi \in C_c^\infty(\R^d)$ and fix $s\in \R$. Then problem
(\ref{eq:FCP}) has a unique bounded classical solution $u\in
C^{1,2}([s,\infty)\times \R^d)$, given by the formula
\begin{equation}\label{eq:OUsol}
u(t,x)= \frac{1}{(4\pi)^{\frac d 2}(\det Q_{t,s})^{\frac 1 2}} \int_{\R^d}
\varphi(\tilde{U}(-s,-t)x+g(t,s)-y) \e^{-\frac 1 4 \langle
Q_{t,s}^{-1}y,y \rangle} \d y,
\end{equation}
where $g(t,s)$ and $Q_{t,s}$ are defined by
\begin{equation}\label{eq:DefQandg_1}
g(t,s)=\int_{-t}^{-s} \tilde{U}(-s,r)f(-r) \d r\quad \mbox{and}
\quad Q_{t,s}=\int_{-t}^{-s} \tilde{U}(-s,r)\tilde{U}^*(-s,r) \d r
\end{equation}
respectively.
\end{prop}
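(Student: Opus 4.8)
The plan is to exploit the time-reversal correspondence already recorded before the statement and to import the representation formula for the backward problem from \cite[Proposition 2.1]{DaPrato/Lunardi:2007}. First I would make the substitution $v(t,x):=u(-t,x)$ precise. The chain rule gives $v_t(t,x)=-u_t(-t,x)$, and since $u$ solves \eqref{eq:FCP} we have $u_t(-t,x)=\OU(-t)u(-t,x)=\tilde{\OU}(t)v(t,x)$; hence $v_t+\tilde{\OU}(t)v=0$ for $t<-s$, while $v(-s,x)=u(s,x)=\varphi(x)$, so $v$ solves \eqref{eq:BCP}. The map $u\mapsto v$ is an involution that merely reflects the time axis, so it preserves both boundedness and $C^{1,2}$-regularity; consequently $u$ is a bounded classical solution of \eqref{eq:FCP} on $[s,\infty)\times\R^d$ if and only if $v$ is a bounded classical solution of \eqref{eq:BCP} on $(-\infty,-s]\times\R^d$. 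In particular, existence and uniqueness for \eqref{eq:FCP} will follow at once from the corresponding statements for \eqref{eq:BCP}.

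Second, I would invoke \cite[Proposition 2.1]{DaPrato/Lunardi:2007} for \eqref{eq:BCP}. Its hypotheses are met because the coefficients are continuous: the drift matrix is $\tilde{M}(t)=M(-t)$, the drift vector is $f(-t)$, and the diffusion term is $\Delta$. That result produces a unique bounded classical solution of \eqref{eq:BCP} together with an explicit Gaussian representation assembled from the evolution family $\tilde U$, a deterministic shift, and a covariance matrix. These two objects are precisely the $g$ and $Q$ of \eqref{eq:DefQandg_1}, namely an integral of $\tilde U(-s,r)f(-r)$ and of $\tilde U(-s,r)\tilde U^*(-s,r)$, the integration being performed over the interval fixed by the terminal time $-s$ and the running time $-t$.

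Third, I would transport the formula back through $u(t,x)=v(-t,x)$ and read off \eqref{eq:OUsol}. The only genuine work is the bookkeeping of time arguments: the terminal data for \eqref{eq:BCP} sit at time $-s$, the solution is evaluated at time $-t$, and the evolution family enters as $\tilde U(-s,-t)$, so that the argument of $\varphi$ becomes $\tilde U(-s,-t)x+g(t,s)-y$, while the reversed integration range $[-t,-s]$ in \eqref{eq:DefQandg_1} is exactly the image under $r\mapsto -r$ of the interval in the backward formula. I would also record that $Q_{t,s}$ is positive definite for $t>s$: each integrand $\tilde U(-s,r)\tilde U^*(-s,r)$ is positive definite because $\tilde U(-s,r)$ is invertible, and integrating over a nondegenerate interval preserves positive definiteness; this guarantees $\det Q_{t,s}\neq 0$ and that $Q_{t,s}^{-1}$ in \eqref{eq:OUsol} is well defined.

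The main obstacle is not conceptual but notational: tracking the four time labels $t,s,-t,-s$ and the reversal of the integration limits while matching the transformed $g$ and $Q$ against the Da Prato--Lunardi objects. Once the time-reversal dictionary between $\OU$ and $\tilde{\OU}$, between $U$ and $\tilde U$, and between the forward and backward integration ranges is set up and applied consistently, the proposition follows with no further analysis.
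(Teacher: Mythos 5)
Your proposal is correct and is exactly the paper's argument: the paper proves this proposition in one sentence by noting that the time reversal $v(t,x)=u(-t,x)$ converts \eqref{eq:FCP} into the backward problem \eqref{eq:BCP} and then citing \cite[Proposition 2.1]{DaPrato/Lunardi:2007}. Your write-up simply makes the chain-rule computation, the time-label bookkeeping, and the positive definiteness of $Q_{t,s}$ explicit, which the paper leaves implicit.
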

Note that the right hand side of (\ref{eq:OUsol}) is well defined
for each $L^p(\R^d)$-function $\varphi$. Thus, in the following this explicit formula serves as a starting point to define an \emph{evolution system} on $L^p(\R^d)$, $1< p < \infty$, associated with problem (\ref{eq:FCP}). Before, we have to give equation (\ref{eq:FCP}) a meaning in the $L^p$-setting, i.e., we have to define the $L^p$-realizations of the formally defined operators $\OU(t)$. For this purpose we set 
\begin{equation}\label{eq:OUdef}
\begin{array}{rcl}
\D(L(t))&:=& \{\varphi \in W^{2,p}(\R^d) :M(t)x\cdot \nabla\varphi(x) \in L^p(\R^d) \},\\
L(t)\varphi & := & \OU(t) \varphi.\vspace{0.1cm}
\end{array}
\end{equation}Here the domain of $L(t)$ is depending on $t$, but $C_c^{\infty}(\R^d)$
is a subset of $\D(L(t))$ for every $t\in\R$. It has been shown by
Metafune \cite{Metafune:2001} and
Metafune, Pr\"uss, Rhandi, Schnaubelt \cite{Metafune/etal:2002} that
in the autonomous case, i.e. for fixed $t$, and for $f(t)=0$, the operator $L(t)$ with
domain $\D(L(t))$ generates a strongly continuous semigroup on
$L^p(\R^d)$. However, due to the fact that the coefficients of the
drift term are unbounded this
semigroup is not analytic on $L^p(\R^d)$ in general. Thus, the existence of an evolution system with nice regularity properties does not follow from the general theory of parabolic evolution equations. However, formula  (\ref{eq:OUsol}) allows to define a family of operators as follows. For $\varphi \in L^p(\R^d)$ we put $G(s,s)\varphi=\varphi$ and for $t>s$ we define the operator $G(t,s)$ by  \small\vspace{0.1cm}
\begin{equation}\label{eq:OUevolsyst}
G(t,s)\varphi(x) := \frac{1}{(4\pi)^{\frac d 2}(\det Q_{t,s})^{\frac 1 2}}
\int_{\R^d} \varphi(\tilde{U}(-s,-t)x+g(t,s)-y) \e^{-\frac 1 4\langle Q_{t,s}^{-1}y,y \rangle} \d y,\vspace{0.1cm}
\end{equation}\normalsize
where $g(t,s)$ and $Q_{t,s}$ are defined as in (\ref{eq:DefQandg_1}).
\begin{lem}\label{lemma:OU_bounded_operators}
For $t\geq s$ fixed, the linear operator $G(t,s)$, defined in (\ref{eq:OUevolsyst}), is bounded on $L^p(\R^d)$, $1<p<\infty$. Moreover, $G(t,s)\varphi \in \D(L(t))$ holds for any $\varphi \in C_c^\infty(\R^d)$ and $t\geq s$.
\end{lem}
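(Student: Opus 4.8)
The plan is to recognize $G(t,s)$ as a Gaussian convolution composed with an affine change of variables, and then to read off both assertions from classical convolution estimates together with the smoothing (Gaussian decay) effect of the kernel. Writing $R:=\tilde U(-s,-t)$, $b:=g(t,s)$, $Q:=Q_{t,s}$, and introducing the normalized Gaussian kernel
\begin{equation*}
k_Q(y):=\frac{1}{(4\pi)^{d/2}(\det Q)^{1/2}}\,\e^{-\frac14\langle Q^{-1}y,y\rangle},
\end{equation*}
formula (\ref{eq:OUevolsyst}) becomes $G(t,s)\varphi(x)=(\varphi * k_Q)(Rx+b)$. Two preliminary facts make this meaningful: since $\tilde U(-s,r)\tilde U^*(-s,r)$ is positive definite for each $r$, the matrix $Q=\int_{-t}^{-s}\tilde U(-s,r)\tilde U^*(-s,r)\,\d r$ is positive definite for $t>s$, so $k_Q$ is a well-defined probability density with $\|k_Q\|_{L^1}=1$; and $R=\tilde U(-s,-t)$ is invertible, being the fundamental matrix of a linear ODE. (The case $t=s$ is trivial, since $G(s,s)=\Id$.)

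For boundedness I would combine Young's inequality with the change of variables induced by the affine map $x\mapsto Rx+b$. Young's inequality gives $\|\varphi*k_Q\|_{p}\le\|k_Q\|_{1}\|\varphi\|_{p}=\|\varphi\|_{p}$, while the substitution $z=Rx+b$ yields $\|(\varphi*k_Q)(R\,\cdot\,+b)\|_{p}=|\det R|^{-1/p}\|\varphi*k_Q\|_{p}$. Hence $\|G(t,s)\varphi\|_{p}\le|\det R|^{-1/p}\|\varphi\|_{p}$, which proves that $G(t,s)$ is bounded on $L^p(\R^d)$ for every $1<p<\infty$.

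For the domain assertion, set $\psi:=\varphi*k_Q$ and note $G(t,s)\varphi=\psi\circ A$ with $A(x)=Rx+b$. Because $\varphi\in C_c^\infty(\R^d)$, one has $D^\alpha\psi=(D^\alpha\varphi)*k_Q$, and Young's inequality shows $\|D^\alpha\psi\|_p\le\|D^\alpha\varphi\|_p<\infty$ for every multi-index $\alpha$; in particular $\psi\in W^{2,p}(\R^d)$, and the chain rule together with the change of variables above transfers this to $G(t,s)\varphi\in W^{2,p}(\R^d)$. It remains to control the drift term. Differentiating gives $\nabla(G(t,s)\varphi)(x)=R^*(\nabla\psi)(Rx+b)$, so
\begin{equation*}
M(t)x\cdot\nabla(G(t,s)\varphi)(x)=\langle RM(t)x,\,(\nabla\psi)(Rx+b)\rangle,
\end{equation*}
and after the substitution $z=Rx+b$ the $L^p$-norm of this expression equals, up to the factor $|\det R|^{-1/p}$, the $L^p$-norm of the map $z\mapsto\langle RM(t)R^{-1}(z-b),(\nabla\psi)(z)\rangle$, whose modulus is bounded by $C(1+|z|)\,|\nabla\psi(z)|$.

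The crux, and the only genuinely non-routine point, is therefore the decay of $\nabla\psi$. Here I would exploit that $\varphi$ has compact support: writing $\nabla\psi(z)=\int_{\R^d}\nabla\varphi(y)\,k_Q(z-y)\,\d y$ with $\operatorname{supp}\nabla\varphi$ contained in some ball $B_r$, the positive definiteness of $Q^{-1}$ yields a bound $k_Q(z-y)\le C\,\e^{-c|z|^2}$ uniformly in $y\in B_r$ for $|z|$ large, whence $|\nabla\psi(z)|\le C'\e^{-c'|z|^2}$ with $c'>0$. Consequently $(1+|z|)\,|\nabla\psi(z)|\in L^p(\R^d)$, so $M(t)x\cdot\nabla(G(t,s)\varphi)\in L^p(\R^d)$ and thus $G(t,s)\varphi\in\D(L(t))$. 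In short, the linear growth of the coefficient $M(t)x$ is absorbed by the Gaussian decay produced by the convolution, which is precisely what the regularizing kernel $k_Q$ provides.
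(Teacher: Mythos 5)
Your proof is correct and follows essentially the same route as the paper: both write $G(t,s)\varphi=(\varphi\ast k_{Q})(Rx+b)$ as a Gaussian convolution composed with an invertible affine map, obtain $L^p$-boundedness from Young's inequality plus a change of variables, and handle the drift term by moving the gradient onto $\varphi$ and exploiting the compact support of $\varphi$ so that the Gaussian decay of the kernel absorbs the linear growth of $M(t)x$. The only (minor) differences are that the paper finishes by pairing $M(t)x\cdot\nabla(\varphi\ast k_{Q})$ against an arbitrary $h\in L^{q}(\R^d)$ and invoking duality, whereas you prove the more direct pointwise bound $|\nabla(\varphi\ast k_{Q})(z)|\le C\e^{-c|z|^2}$ and integrate, and you additionally verify the $W^{2,p}$-membership explicitly, a point the paper treats as immediate.
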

\begin{proof}
First let us note, that for
$\varphi \in L^p(\R^d)$ and $t> s$ we can write
$$
G(t,s)\varphi(x)= (\varphi \ast k_{t,s})(\tilde{U}(-s,-t)x +
g(t,s)), \quad  x\in\R^d,
$$ 
where the kernel $k_{t,s}$ is defined by
$$
k_{t,s}(x):= \frac{1}{(4\pi)^{\frac d 2}(\det Q_{t,s})^{\frac 1 2}} \e^{-\tfrac 1 4 \langle Q_{t,s}^{-1}x,x \rangle}, \quad
x\in\R^d.
$$
By a change of variable and Young's inequality we obtain
\begin{align}
\|G(t,s)\varphi\|_{L^p(\R^d)} & = \Big(\int_{\R^d} \big|\left(\varphi \ast
k_{t,s}\right)(\tilde{U}(-s,-t)x+g(t,s))\big|^p \d x \Big)^{\frac 1 p}
\nonumber \\
&= |\det \tilde{U}(-s,-t)|^{\frac 1 p}\Big(\int_{\R^d}
\left|\left(\varphi \ast k_{t,s}\right)(x)\right|^p \d x
\Big)^{\frac 1 p}
\nonumber \\
&\leq |\det \tilde{U}(-s,-t)|^{\frac 1 p} \|\varphi\|_{L^p(\R^d)} \|k_{t,s}\|_{L^1(\R^d)}\nonumber \\
& \leq
C \|\varphi\|_{L^p(\R^d)},\vspace{0.25cm}
\nonumber
\end{align}
for some constant $C>0$. This proves the first assertion.

\noindent To prove the second assertion it suffices to show $M(t)x \cdot \nabla (\varphi \ast k_{t,s})(x) \in L^p(\R^d)$, since $ \tilde{U}(-s,-t)$ is an invertible matrix. At first we note that\small
\begin{align*}
&\nabla\left(\varphi \ast k_{t,s}\right)(x)=\frac{1}{(4\pi)^{\frac d 2}(\det Q_{t,s})^{\frac 1 2}}
\int_{\R^d} \nabla \varphi(y)\e^{
-\tfrac 1 4  \big|Q_{t,s}^{-1/2}(x-y)\big|^2} \d y
\end{align*}\normalsize
holds. Now for a function $h\in L^{q}(\R^d)$ with $\frac 1 p + \frac{1}{q}=1$ we obtain\small
\begin{align*}
&\int_{\R^d}|\left(M(t)x \cdot \nabla(\varphi \ast k_{t,s})(x)\right)h(x)|\d x\\
&\quad\leq C \int_{\R^d} |\nabla\varphi(y)| \int_{\R^d} \big|M(t)x \, \exp
\big(-\tfrac 1 4 \big|Q_{t,s}^{-\frac 1 2}(x-y)\big|^2 \big) h(x) \big|\d x\, \d y\\
&\quad\leq C \int_{\R^d} |\nabla\varphi(y)| \int_{\R^d} \big|M(t)x\,  \exp
\big(-\tfrac 1 4 \big|Q_{t,s}^{-\frac 1 2}x\big|^2 -\tfrac 1 4 \big|Q_{t,s}^{-\frac 1 2}y\big|^2 + \tfrac 1 2 \langle x , y \rangle \big) h(x) \big|\d x\, \d y\\
&\quad\leq C \int_{\mathrm{supp}\;\varphi} \big|\nabla\varphi(y) \exp\big(-\tfrac 1 4 \Big|Q_{t,s}^{-1/2}y\big|^2 \big)\big|\d y \;\cdot\\
&\qquad \qquad\qquad \int_{\R^d} \big|M(t)x \, \exp
\big(-\tfrac 1 4 \big(\big|Q_{t,s}^{-\frac 1 2}x\big|^2  - 2 K |x|\big) \big) h(x) \big|\d x \vspace{0.25cm}
\end{align*}\normalsize
for constants $C,K>0$. Here we essentially used the fact that $\mathrm{supp}\;  \varphi$ is compact. Thus, we can conclude that\small
$$
\int_{\R^d}|\left(M(t)x \cdot \nabla(\varphi \ast k_{t,s})(x)\right)h(x)|\d x < \infty
$$\normalsize
holds for every $h\in L^{q}(\R^d)$ with $\frac 1 p + \frac{1}{q}=1$. This yields the assertion. 
\end{proof}
\noindent We are now in position to state the main result of this section.
\begin{prop}\label{prop:OUevolsyst}
Let $1< p < \infty$. The two parameter family of bounded linear operators $\{G(t,s): s\leq t\}$
defines an evolution system on $L^p(\R^d)$, i.e.,\vspace{0.1cm}
\begin{itemize}
\item[(i)] $G(s,s)=Id$ \hspace{0.2cm} and \hspace{0.2cm} $G(t,s) = G(t,r)G(r,s)$ \hspace{0.2cm} for  \hspace{0.2cm}  $-\infty < s \leq r \leq t <\infty $, \vspace{0.1cm}
\item[(ii)] for each $\varphi\in L^p(\R^d)$, \hspace{0.1cm} $(t,s) \mapsto
G(t,s)\varphi$ \hspace{0.1cm} is continuous on  \hspace{0.05cm} $-\infty < s \leq t < \infty$.
\end{itemize}\vspace{0.1cm}
Moreover, for any initial value $\varphi \in C_c^\infty(\R^d)$, the abstract non-autonomous Cauchy problem
\begin{equation}\label{eq:OU_ACP}\left\{
\begin{array}{rclll}
u'(t)& = & L(t)u(t), & s< t, \\[0.1cm]
u(s) & = & \varphi,
\end{array}\right.
\end{equation}
admits a classical solution $u$ given by $u(t) = G(t,s)\varphi$.
\end{prop}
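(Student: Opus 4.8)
The plan is to establish the three claimed properties --- the evolution system laws (i) and (ii) and the solvability of the Cauchy problem --- by exploiting the explicit representation formula \eqref{eq:OUevolsyst} together with the structure of the underlying flow maps $\tilde U$, $g$ and $Q$. The crucial algebraic input is the composition behaviour of these objects. First I would verify the \emph{cocycle} (Chapman--Kolmogorov) relations satisfied by $\tilde U$, $g_{t,s}$ and $Q_{t,s}$. Since $\tilde U(-s,-t)$ solves a linear matrix ODE, the evolution-family law $\tilde U(-s,-t) = \tilde U(-s,-r)\tilde U(-r,-t)$ is immediate. For $g$ and $Q$, splitting the defining integrals in \eqref{eq:DefQandg_1} at the intermediate point $-r$ and using the evolution law of $\tilde U$ should yield the identities
\begin{equation}\label{eq:plan_cocycle}
g(t,s) = \tilde U(-s,-r)\,g(t,r) + g(r,s), \qquad
Q_{t,s} = \tilde U(-s,-r)\,Q_{t,r}\,\tilde U^*(-s,-r) + Q_{r,s},
\end{equation}
valid for $s\le r\le t$. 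These are exactly the relations needed so that the Gaussian kernels convolve correctly.

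With \eqref{eq:plan_cocycle} in hand, property (i) follows by a direct computation: I would write out $G(t,r)G(r,s)\varphi$ using the kernel form from the proof of Lemma~\ref{lemma:OU_bounded_operators}, substitute the representation of $G(r,s)\varphi$ as a convolution evaluated at $\tilde U(-s,-r)x + g(r,s)$, and then perform a change of variables that absorbs the two Gaussian convolutions into a single one. The identity $Q_{t,s} = \tilde U(-s,-r)Q_{t,r}\tilde U^*(-s,-r) + Q_{r,s}$ is precisely the statement that the convolution of the two centred Gaussians with covariances $\tilde U(-s,-r)Q_{t,r}\tilde U^*(-s,-r)$ and $Q_{r,s}$ is the Gaussian with covariance $Q_{t,s}$, while the $g$-identity handles the shift in the argument; together they collapse the composition to $G(t,s)\varphi$. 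The semigroup-at-the-diagonal law $G(s,s)=\Id$ holds by definition. Property (ii), the joint strong continuity in $(t,s)$, I would first prove for $\varphi\in C_c^\infty(\R^d)$ --- where continuous dependence of $\tilde U(-s,-t)$, $g(t,s)$ and $Q_{t,s}$ on the parameters makes the integrand in \eqref{eq:OUevolsyst} depend continuously on $(t,s)$, so dominated convergence applies --- and then extend to all of $L^p(\R^d)$ by the uniform boundedness of $G(t,s)$ from Lemma~\ref{lemma:OU_bounded_operators} together with density of $C_c^\infty(\R^d)$, via a standard $\eps/3$ argument.

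For the final assertion, that $u(t)=G(t,s)\varphi$ solves the abstract Cauchy problem \eqref{eq:OU_ACP} classically for $\varphi\in C_c^\infty(\R^d)$, I would invoke Proposition~\ref{prop:OU_solution_formula}: for such $\varphi$ the function $(t,x)\mapsto (G(t,s)\varphi)(x)$ coincides with the unique bounded classical solution $u(t,x)$ of the pointwise forward problem \eqref{eq:FCP}, which is $C^{1,2}$ and satisfies $u_t = \OU(t)u = L(t)u$ with $u(s)=\varphi$. It remains to upgrade this pointwise solution to a solution of the \emph{abstract} problem in $L^p(\R^d)$, i.e. to check that $t\mapsto G(t,s)\varphi$ is differentiable in the $L^p$-norm with derivative $L(t)G(t,s)\varphi$, and that $G(t,s)\varphi\in\D(L(t))$; the membership in the domain is already furnished by Lemma~\ref{lemma:OU_bounded_operators}. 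I expect the main obstacle to be exactly this passage from pointwise to $L^p$-differentiability: one must control the difference quotients of the Gaussian integral uniformly enough in $x$ to justify differentiation under the $L^p$-norm, and in particular show that the unbounded term $M(t)x\cdot\nabla u$ stays in $L^p$ along the trajectory. The compact support of $\varphi$, exploited in Lemma~\ref{lemma:OU_bounded_operators} precisely to tame the factor $M(t)x$ against the Gaussian decay, is the key device here, and the same estimates --- bounding $|M(t)x|\exp(-\tfrac14|Q_{t,s}^{-1/2}x|^2)$ --- should provide the dominating functions needed to interchange limit and integral.
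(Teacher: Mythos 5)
Your handling of the composition law (i) is correct but follows a genuinely different route from the paper's: the paper does not verify Chapman--Kolmogorov by hand, it quotes the evolution law for $\varphi\in C_c^\infty(\R^d)$ from \cite[Proposition 2.4]{Geissert/Lunardi:2007} and then extends to $L^p(\R^d)$ by density. Your cocycle identities $g(t,s)=\tilde U(-s,-r)\,g(t,r)+g(r,s)$ and $Q_{t,s}=\tilde U(-s,-r)\,Q_{t,r}\,\tilde U^*(-s,-r)+Q_{r,s}$ are exactly what splitting the integrals in \eqref{eq:DefQandg_1} at $-r$ gives, and the Gaussian bookkeeping (push the inner kernel $k_{t,r}$ forward by $\tilde U(-s,-r)$, then convolve with $k_{r,s}$, using that convolution of centred Gaussians adds the quadratic-form matrices) does collapse $G(t,r)G(r,s)\varphi$ to $G(t,s)\varphi$. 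What this buys is self-containedness where the paper delegates to a reference. For the final assertion your argument coincides with the paper's (Proposition~\ref{prop:OU_solution_formula} plus Lemma~\ref{lemma:OU_bounded_operators}); you are in fact more careful than the paper, which does not comment on upgrading the pointwise $C^{1,2}$ solution to differentiability in the $L^p$-norm.

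There is, however, a genuine gap in your argument for (ii). You apply dominated convergence to the integrand of \eqref{eq:OUevolsyst} \emph{as it stands}, on the grounds that it depends continuously on $(t,s)$. That is true only on the open region $t>s$, whereas (ii) asserts continuity on the closed region $s\le t$, and the decisive points are the diagonal ones: one must show $G(t_n,s_n)\varphi\to\varphi$ in $L^p(\R^d)$ as $(t_n,s_n)\to(s,s)$, which is the analogue of strong continuity at $t=0$ for a semigroup and is precisely what makes $\{G(t,s)\}$ an evolution system. At the diagonal your argument breaks down: $Q_{s,s}=0$, so $(\det Q_{t,s})^{-1/2}$ blows up, $Q_{t,s}^{-1}$ has no limit, and the kernel $k_{t,s}$ converges to a Dirac mass, so there is neither a continuous extension of the integrand nor an integrable dominating function. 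The device you are missing is the paper's first step: substitute $y=Q_{t,s}^{1/2}z$, so that
\begin{equation*}
G(t,s)\varphi(x)=\frac{1}{(4\pi)^{\frac d2}}\int_{\R^d}\varphi\big(\tilde U(-s,-t)x+g(t,s)-Q_{t,s}^{\frac 12}z\big)\,\e^{-\frac{|z|^2}{4}}\,\d z,
\end{equation*}
whose integrand is continuous in $(t,s)$ up to and including the diagonal (where $Q_{t,s}^{1/2}\to 0$, $\tilde U(-s,-t)\to I$, $g(t,s)\to 0$) and is dominated by $\|\varphi\|_{\infty}\,\e^{-|z|^2/4}$; dominated convergence then works on all of $\{s\le t\}$. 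A smaller point in the same part: your $\eps/3$ argument needs $\|G(t,s)\|$ bounded \emph{locally uniformly} in $(t,s)$, not merely for each fixed pair; this does follow from the proof of Lemma~\ref{lemma:OU_bounded_operators}, since $\|k_{t,s}\|_{L^1}=1$ and the Jacobian factor $|\det\tilde U(-s,-t)|$ is continuous in $(t,s)$, but it should be stated.
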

\begin{proof}
In \cite[Proposition 2.4]{Geissert/Lunardi:2007} it was shown
that the law of evolution  
\begin{equation*}\label{eq:FunctEq}
G(t,s)G(s,r)\varphi = G(t,r)\varphi, \qquad \qquad r\leq s \leq t, 
\end{equation*}
holds for every $\varphi \in C_c^{\infty}(\R^d)$. Since $C_c^{\infty}(\R^d)$ is dense in $L^p(\R^d)$, property (i) follows.

In order to prove property (ii), we apply the change of the variable $y=Q_{t,s}^{1/2}z$, to see that
$$
G(t,s)\varphi(x)= \frac{(\det Q_{t,s})^{\frac 1 2}}{(4\pi)^{\frac d 2}(\det
Q_{t,s})^{\frac 1 2}} \int_{\R^d}
\varphi(\tilde{U}(-s,-t)x+g(t,s)-Q_{t,s}^{\frac 1 2}z) \e^{-\frac{|z|^2}{4}} \d z
$$
holds. For $t>s$ fixed, we pick two sequences $(t_n)_{n\in \N}$ and $(s_n)_{n\in \N}$ such that $t_n \geq s_n $ holds for every $n\in \N$ and $(t_n,s_n)\to (t,s)$ as $n \to \infty$. For every $\varphi \in C_c^{\infty}(\R^d)$ and every
$x\in\R^d$ we now obtain
$$
\varphi(\tilde{U}(-s_n,-t_n)x+g(t_n,s_n)-Q_{t_n,s_n}^{\frac 1 2}z)
\rightarrow \varphi(\tilde{U}(-s,-t)x+g(t,s)-Q_{t,s}^{\frac 1 2}z)
$$
as $n\rightarrow \infty$. Lebegue's theorem now yields
$G(t_n,s_n)\varphi \rightarrow G(t,s)\varphi$
as $n\rightarrow \infty$ for every $\varphi\in
C_c^{\infty}(\R^d)$. The density of $C_c^{\infty}(\R^d)$ in
$L^p(\R^d)$ yields (ii).

The last assertion follows directly from Proposition \ref{prop:OU_solution_formula} and Lemma \ref{lemma:OU_bounded_operators}. 
\end{proof}
In order to prove $L^p$-$L^q$ and gradient estimates in the
following section we need the following estimates for the matrices
$Q_{t,s}$.
\begin{lem}\label{lemma:Qts_Estimates}
For $0<T<\infty$ there exists a constant $C:=C(T)>0$ such that
\begin{itemize}
\item[(i)]
$\|Q_{t,s}^{-\frac 1 2}\| \leq C (t-s)^{-\frac 1 2}, \quad 0<s<t<T$,
\item[(ii)]$ (\det Q_{t,s})^{\frac 1 2} \geq
C(t-s)^{\frac d 2},  \quad 0<s<t<T$.
\end{itemize}
\end{lem}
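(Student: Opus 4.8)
The plan is to reduce both estimates to a single lower bound on the smallest eigenvalue of the symmetric positive definite matrix $Q_{t,s}$. First I would rewrite the quadratic form: for any $\xi\in\R^d$,
\[
\langle Q_{t,s}\xi,\xi\rangle = \int_{-t}^{-s}\big|\tilde U^*(-s,r)\xi\big|^2\,\d r .
\]
Since $\tilde U^*(-s,r)$ is invertible, the integrand is nonnegative and vanishes only at $\xi=0$, so $Q_{t,s}$ is symmetric and positive definite whenever $t>s$. Writing $\lambda_{\min}(Q_{t,s})$ for its smallest eigenvalue, spectral calculus for symmetric positive definite matrices gives $\|Q_{t,s}^{-1/2}\|=\lambda_{\min}(Q_{t,s})^{-1/2}$, while $\det Q_{t,s}=\prod_i\lambda_i\geq\lambda_{\min}(Q_{t,s})^d$. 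Hence an estimate of the form $\lambda_{\min}(Q_{t,s})\geq\kappa\,(t-s)$ would immediately yield (i) with constant $\kappa^{-1/2}$ and (ii) with constant $\kappa^{d/2}$.

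The core step is therefore to prove $\lambda_{\min}(Q_{t,s})\geq\kappa\,(t-s)$ uniformly for $0<s<t<T$, and for this I would exploit the uniform invertibility of the evolution family over the compact parameter range. As $\tilde M$ is continuous, $m:=\sup_{\tau\in[-T,0]}\|\tilde M(\tau)\|<\infty$, and Gronwall's inequality applied to the defining ODE for $\tilde U$ gives $\|\tilde U(a,b)\|\leq\e^{m|a-b|}\leq\e^{mT}$ for all $a,b\in[-T,0]$. In particular $\tilde U(-s,r)^{-1}=\tilde U(r,-s)$ satisfies $\|\tilde U(-s,r)^{-1}\|\leq\e^{mT}$, so that $|\tilde U^*(-s,r)\xi|\geq\e^{-mT}|\xi|$ for every $r\in[-t,-s]$. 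Substituting into the integral representation,
\[
\langle Q_{t,s}\xi,\xi\rangle \geq \e^{-2mT}|\xi|^2\int_{-t}^{-s}\d r = \e^{-2mT}(t-s)\,|\xi|^2,
\]
so one may take $\kappa=\e^{-2mT}$.

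Combining the two paragraphs completes the argument: (i) holds with constant $\e^{mT}$ and (ii) with constant $\e^{-mTd}$, both of which depend only on $T$ (through $m$) and on $d$. The only point demanding care — and the main, if modest, obstacle — is the uniform control of $\tilde U$ together with its inverse across the whole admissible range of $(t,s,r)$; everything else is elementary linear algebra. Here one should check that all three arguments $-t$, $-s$, $r$ remain inside the fixed compact interval $[-T,0]$, since this is precisely what allows a single $m$, and hence a single constant $C(T)$, to serve for all admissible $s$ and $t$.
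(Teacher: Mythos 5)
Your proof is correct, and it reaches the crucial estimate by a genuinely different mechanism than the paper. Both arguments reduce (i) and (ii) to the bound $\lambda_{\min}(Q_{t,s})\geq\kappa(t-s)$, equivalently $\langle Q_{t,s}\xi,\xi\rangle\geq\kappa(t-s)|\xi|^{2}$, and both finish with the same linear algebra (the paper's step $\det Q_{t,s}^{-1}\leq C\|Q_{t,s}^{-1}\|^{d}$ is exactly your $\det Q_{t,s}\geq\lambda_{\min}(Q_{t,s})^{d}$ in different clothing). The difference is how that bound is proved. The paper argues by a soft continuity dichotomy: since $\tilde U^{*}(-s,r)$ is close to the identity when $|r+s|$ is small, the integrand $\|\tilde U^{*}(-s,r)\xi\|^{2}$ is at least $\tfrac14\|\xi\|^{2}$ provided $t-s\leq\delta$; for $t-s\geq\delta$ it discards all of the integral except the subinterval $[-s-\delta,-s]$ and then uses $t-s\leq T$, so the proof splits into two cases and the constants are not explicit. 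You instead establish a lower bound on the integrand valid pointwise for \emph{every} $r\in[-t,-s]$, namely $|\tilde U^{*}(-s,r)\xi|\geq\e^{-mT}|\xi|$, obtained from the Gronwall bound $\|\tilde U(r,-s)\|\leq\e^{mT}$ applied to the inverse $\tilde U(-s,r)^{-1}=\tilde U(r,-s)$; integrating then gives $\kappa=\e^{-2mT}$ with no case distinction. Your route buys explicit constants ($\e^{mT}$ in (i), $\e^{-mTd}$ in (ii), with $m=\sup_{\tau\in[-T,0]}\|\tilde M(\tau)\|$) and a shorter argument; what it uses in addition is the cocycle identity and the two-sided Gronwall estimate, both legitimate here because $\tilde U$ solves a linear ODE with continuous coefficients on the compact interval $[-T,0]$ -- and, correctly, you invoke no commutativity of the matrices $M(t)$, in keeping with the standing assumptions of Section 2. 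The paper's softer argument (taken from Geissert--Lunardi) needs only continuity of $(s,t)\mapsto\tilde U(-s,-t)$ rather than quantitative control coming from the differential equation, which is its own mild advantage, but it pays for this with the small/large $t-s$ case split.
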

Assertion (i) has been proved by Geissert and Lunardi \cite[Lemma
3.2]{Geissert/Lunardi:2007}. However, to make the paper as
self-contained as possible we provide a proof here.
\begin{proof}
Let $T>0$ and $x\in \R^d$. From (\ref{eq:DefQandg_1}) we obtain
\begin{equation*}
\langle Q_{t,s}x, x \rangle  = \int_{-t}^{-s}\langle \tilde U
(-s,r)\tilde U^* (-s,r)x, x\rangle \d r\ = \int_{-t}^{-s}\|\tilde
U^* (-s,r)x\|^2 \d r.
\end{equation*}
The continuity of the map $(-s,-t)\mapsto \tilde U (-s,-t)$
yields that there exists a $\delta > 0$ such that $\|\tilde U^*
(-s,-t)x-x\| \leq \frac 1 2 \|x\|$ for $t-s \leq \delta$. Thus
\begin{equation}\label{eq:Qts_1}
\langle Q_{t,s}x, x \rangle \geq \frac 1 4 (t-s) \|x\|^{2}
\end{equation}
holds for $0<t-s<\delta$. If $t-s \geq \delta$, we have
\begin{align}\label{eq:Qts_2}
\langle Q_{t,s}x, x \rangle & = \int_{-t}^{-s}\|\tilde U^* (-s,r)x\|^2 \d r \geq \int_{-s-\delta}^{-s}\|\tilde U^* (-s,r)x\|^2 \d r \notag\\
& \geq \frac 1 4 \delta \|x\|^2 \geq \frac{1}{4T} \delta (t-s)
\|x\|^2.
\end{align}
Since $Q_{t,s}$ is symmetric and positive definite, it follows from
(\ref{eq:Qts_1}) and (\ref{eq:Qts_2}) that
$$
\|Q_{t,s}^{-\frac 1 2}\| \leq C (t-s)^{-\frac 1 2}
$$
holds for all $0<s<t<T$ and a suitable constant $C>0$ depending on
$T$. To show assertion (ii) we first observe that $\det Q_{t,s}^{-1}
\leq C \|Q_{t,s}^{-1}\|^d$ holds for a suitable constant $C>0$. Thus
by applying (i) we obtain
$$
\det Q_{t,s} = \left(\det Q_{t,s}^{-1}\right)^{-1}\geq C_1
\left(\|Q_{t,s}\|^d \right)^{-1} \geq C_2 (t-s)^d,
$$
for constants $C_1,C_2>0$ and assertion (ii) directly follows.
\end{proof}
\noindent In the case that $M(t),M(s)$ commute for all $t,s\in\R$, we have $\tilde{U}(-s,-t) = U(t,s)$. This can easily
been seen, since in this case $U(t,s)$ has the explicit form
(\ref{eq:U(t,s)}). By a simple change of variables the representation
formula (\ref{eq:OUevolsyst}) can be rewritten to the following
form.
\begin{cor}\label{cor:OUevolsyst}
Let $M(t),M(s)$ commute for all $s,t \in \R$. Then for $\varphi \in
L^p(\R^d)$ and $t>s$ the evolution operator $G(t,s)$ associated with the non-autonomous Cauchy problem
(\ref{eq:OU_ACP}) is given by\small
\begin{equation}
G(t,s)\varphi(x):= \frac{1}{(4\pi)^{\frac d 2}(\det Q_{t,s})^{\frac 1 2}}
\int_{\R^d} \varphi(U(t,s)x+g(t,s)-y) \e^{-\frac 1 4 \langle
Q_{t,s}^{-1}y,y \rangle} \d y,
\end{equation}\normalsize
where $g(t,s)$ and $Q_{t,s}$ are defined by
\begin{equation}\label{eq:DefQandg}
g(t,s)=\int_{s}^{t} U(r,s)f(r) \d r\quad \mbox{and} \quad
Q_{t,s}=\int_{s}^{t} U(r,s)U^*(r,s) \d r,
\end{equation}
respectively.
\end{cor}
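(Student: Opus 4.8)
The plan is to derive the stated formula directly from the general representation (\ref{eq:OUevolsyst}) together with the defining integrals (\ref{eq:DefQandg_1}), simply by exploiting the commutativity hypothesis to replace every occurrence of $\tilde U$ by $U$ and to reverse the sign of the integration variable. The essential input is the pointwise identity
$$
\tilde U(-s,-r) = U(r,s), \qquad r,s\in\R,
$$
which already appears informally in the text preceding the corollary and which I would verify as follows. Since all the matrices $M(\tau)$ commute, the solution of (\ref{eq:CP_U(t,s)}) is given explicitly by $U(t,s)=\exp\big(\int_s^t M(\tau)\,\d\tau\big)$, exactly as in (\ref{eq:U(t,s)}). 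The same reasoning applies to $\tilde U$, because $\tilde M(\tau)=M(-\tau)$ and the matrices $M(-\tau)$ commute among themselves; hence $\tilde U(t,s)=\exp\big(\int_s^t M(-\tau)\,\d\tau\big)$. Carrying out the change of variable $\sigma=-\tau$ in the exponent and then evaluating at the pair $(-s,-r)$ gives $\tilde U(-s,-r)=\exp\big(\int_s^r M(\sigma)\,\d\sigma\big)=U(r,s)$. In particular, with $r=t$ the argument of $\varphi$ in (\ref{eq:OUevolsyst}) becomes $\tilde U(-s,-t)x+g(t,s)-y = U(t,s)x+g(t,s)-y$, so the translation and rotation part of the formula already takes the asserted shape.

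It then remains to rewrite the vector $g(t,s)$ and the matrix $Q_{t,s}$ of (\ref{eq:DefQandg_1}) in the new variables. I would apply the substitution $r\mapsto -r$ to both defining integrals. For $g(t,s)=\int_{-t}^{-s}\tilde U(-s,r)f(-r)\,\d r$ this sends the limits $(-t,-s)$ to $(t,s)$ and produces $\int_s^t \tilde U(-s,-r)f(r)\,\d r$; for $Q_{t,s}=\int_{-t}^{-s}\tilde U(-s,r)\tilde U^*(-s,r)\,\d r$ it produces $\int_s^t \tilde U(-s,-r)\tilde U^*(-s,-r)\,\d r$. Inserting the identity $\tilde U(-s,-r)=U(r,s)$ into these two expressions yields precisely $g(t,s)=\int_s^t U(r,s)f(r)\,\d r$ and $Q_{t,s}=\int_s^t U(r,s)U^*(r,s)\,\d r$, which are the formulas (\ref{eq:DefQandg}). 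Since the normalising factor $(\det Q_{t,s})^{-1/2}$ and the Gaussian weight $\e^{-\frac14\langle Q_{t,s}^{-1}y,y\rangle}$ depend on $Q_{t,s}$ only through the matrix itself, substituting the rewritten $Q_{t,s}$ and $g(t,s)$ into (\ref{eq:OUevolsyst}) delivers the claimed representation of $G(t,s)$.

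The whole argument is essentially bookkeeping, so there is no genuine analytic obstacle; the only step requiring care is the verification of $\tilde U(-s,-r)=U(r,s)$, where one must track the sign reversal in both the exponent and the limits of integration. This is also the point at which the commutativity hypothesis enters in an essential way: it is precisely what guarantees the exponential representation of both $U$ and $\tilde U$ and hence permits the sign-reversing change of variable in the exponent to be carried out termwise. Without commutativity the time-ordered products defining $U$ and $\tilde U$ would not collapse to single exponentials and the clean identity would fail, which explains why the corollary is stated only under this assumption.
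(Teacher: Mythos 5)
Your proposal is correct and follows essentially the same route as the paper: the paper's justification is precisely the remark preceding the corollary, namely that commutativity gives the explicit exponential form of $U(t,s)$ and $\tilde U(t,s)$, hence the identity $\tilde U(-s,-t)=U(t,s)$, after which the formulas (\ref{eq:DefQandg}) follow from (\ref{eq:DefQandg_1}) by the sign-reversing change of variables. You have merely written out the details the paper leaves implicit, including the correct observation that commutativity is what makes the identity $\tilde U(-s,-r)=U(r,s)$ hold.
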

\section{The Linearized Problem: The Evolution System on $L^p_{\sigma}(\R^d)$}
From now on our standing assumption is that $M:[0,\infty)  \rightarrow
\R^{d\times d}$, $f:[0,\infty)\rightarrow \R^d$ are continuous and
$M(t), M(s)$ commute for all $t,s> 0$. We recall that in this
case the solution to problem (\ref{eq:CP_U(t,s)}) for $t,s\geq0$
is given by
\begin{equation}\label{eq:U(t,s)_2}
 U(t,s) = \exp\left( \int_s^t M(\tau) \d \tau\right).
\end{equation}
We define the family of linear operators $B(t)$, $t>0$,
in $L^p(\R^d)^d$, $1<p<\infty$, by
\begin{equation}\label{eq:operatorB}
\begin{array}{rcl}
\D(B(t))&:=&\D(L(t))^d,\\
B(t)u & := & D_{L(t)}u - M(t)u,
\end{array}
\end{equation}
where $u=(u_1,\ldots,u_d)\in L^p(\R^d)^d$. Here $D_{L(t)}$ is the
$d\times d$ diagonal matrix operator with entries $L(t)$, defined as
in (\ref{eq:OUdef}). 
For $u\in L^p(\R^d)^d$ we put $W(s,s)u=u$ and for $0\leq s < t$ we define
\begin{align}\label{eq:EvolSyst}
W(t,s)u(x) &= \frac{1}{(4\pi)^{\frac d 2}(\det Q_{t,s})^{\frac 1 2}}U(s,t)\cdot
\int_{\R^d} u(U(t,s)x+g(t,s)-y)\notag \\
& \qquad \times  \e^{-\frac 1 4 \langle Q_{t,s}^{-1}y,y
\rangle} \d y, \qquad \qquad\qquad x\in \R^d,
\end{align}\normalsize
where $g(t,s)$ and $Q_{t,s}$ are defined as in (\ref{eq:DefQandg}). Analogously to Lemma \ref{lemma:OU_bounded_operators} it follows that, for $0\leq s \leq t$, the operator $W(t,s)$ is well defined and bounded on $L^p(\R^d)^d$. Based on Proposition \ref{prop:OUevolsyst} and Corollary \ref{cor:OUevolsyst} we now obtain the following result.
\begin{prop}\label{prop:EvolSyst}
Let $1<p<\infty$. The two parameter family of bounded linear operators $\{W(t,s): 0\leq s\leq t\}$
defines an evolution system on $L^p(\R^d)^d$, i.e.,\vspace{0.1cm}
\begin{itemize}
\item[(i)] $W(s,s)=Id$ \hspace{0.2cm} and \hspace{0.2cm} $W(t,s) = W(t,r)W(r,s)$ \hspace{0.2cm} for  \hspace{0.2cm}  $0\leq s \leq r \leq t  < \infty$, \vspace{0.1cm}
\item[(ii)] for each $u\in L^p(\R^d)^d$, \hspace{0.1cm} $(t,s) \mapsto
W(t,s)u$ \hspace{0.1cm} is continuous on  \hspace{0.1cm} $0\leq  s \leq t < \infty$.
\end{itemize}\vspace{0.1cm}
Moreover, for any initial value $\varphi \in C_c^\infty(\R^d)^d$, the abstract non-autonomous Cauchy problem
\begin{equation}\label{eq:prop_evolution_W_nACP}\left\{
\begin{array}{rclll}
u'(t)& = & B(t)u(t), &0\leq s < t, \\[0.05cm]
u(s) & = & \varphi,
\end{array}\right.
\end{equation}
admits a classical solution $u$ given by $u(t) = W(t,s)\varphi$.
\end{prop}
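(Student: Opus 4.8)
The plan is to exploit the fact that the matrix factor $U(s,t)$ in \eqref{eq:EvolSyst} does not depend on $x$, so that $W(t,s)$ factorises through the \emph{scalar} Ornstein-Uhlenbeck evolution operator of Corollary \ref{cor:OUevolsyst}. Writing $D_{G(t,s)}$ for the diagonal matrix operator acting componentwise by $G(t,s)$ (in analogy with the operator $D_{L(t)}$ in \eqref{eq:operatorB}), a direct comparison of \eqref{eq:EvolSyst} with the formula of Corollary \ref{cor:OUevolsyst} shows that
\begin{equation*}
W(t,s) = U(s,t)\, D_{G(t,s)}, \qquad 0\le s\le t.
\end{equation*}
Boundedness of $W(t,s)$ on $L^p(\R^d)^d$ then follows from Lemma \ref{lemma:OU_bounded_operators} together with the boundedness of the constant matrix $U(s,t)$, as already remarked after \eqref{eq:EvolSyst}. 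Two elementary facts will be used repeatedly. First, since each $U(s,t)$ has constant (in $x$) entries and $G(t,s)$, $L(t)$ are \emph{linear}, every such matrix commutes with $D_{G(t,s)}$ and with $D_{L(t)}$. Second, since the $M(\tau)$ mutually commute, \eqref{eq:U(t,s)_2} yields both the matrix cocycle $U(r,t)U(s,r)=U(s,t)$ and the derivative formula $\tfrac{\d}{\d t}U(s,t) = -M(t)U(s,t)$.

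For property (i) the identity $W(s,s)=\mathrm{Id}$ is immediate, and for $s\le r\le t$ I would compute
\begin{equation*}
W(t,r)W(r,s) = U(r,t)\,D_{G(t,r)}\,U(s,r)\,D_{G(r,s)} = U(r,t)U(s,r)\,D_{G(t,r)}D_{G(r,s)},
\end{equation*}
where the constant matrix $U(s,r)$ was moved across $D_{G(t,r)}$ by linearity. Here $D_{G(t,r)}D_{G(r,s)} = D_{G(t,s)}$ by the scalar evolution law Proposition \ref{prop:OUevolsyst}(i) applied componentwise, and $U(r,t)U(s,r) = U(s,t)$ by the matrix cocycle, so the right-hand side equals $U(s,t)D_{G(t,s)} = W(t,s)$. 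Property (ii) reduces to the scalar case in the same spirit: for fixed $u$ the map $(t,s)\mapsto D_{G(t,s)}u$ is continuous in $L^p(\R^d)^d$ by Proposition \ref{prop:OUevolsyst}(ii) applied in each component, while $(t,s)\mapsto U(s,t)$ is continuous and locally bounded by \eqref{eq:U(t,s)_2}; continuity of the product $(t,s)\mapsto U(s,t)D_{G(t,s)}u$ follows.

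It remains to solve the Cauchy problem \eqref{eq:prop_evolution_W_nACP}. Given $\varphi\in C_c^\infty(\R^d)^d$, I set $w(t):=D_{G(t,s)}\varphi$, so that $u(t)=U(s,t)w(t)$. By the last assertion of Proposition \ref{prop:OUevolsyst} applied componentwise, $w$ is a classical solution of $w'(t)=D_{L(t)}w(t)$ with $w(s)=\varphi$ and $w(t)\in\D(L(t))^d$; since $U(s,t)$ is invertible with constant entries, also $u(t)\in\D(L(t))^d=\D(B(t))$. Differentiating the product and using $\tfrac{\d}{\d t}U(s,t)=-M(t)U(s,t)$ then gives
\begin{equation*}
u'(t) = -M(t)U(s,t)w(t) + U(s,t)D_{L(t)}w(t) = -M(t)u(t) + D_{L(t)}u(t) = B(t)u(t),
\end{equation*}
where in the second term the constant matrix $U(s,t)$ was again moved across $D_{L(t)}$, and $u(s)=U(s,s)\varphi=\varphi$. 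The one step that must be got exactly right is this final differentiation: the main obstacle is not difficulty but bookkeeping, namely checking that the derivative of the rotation factor $U(s,t)$ produces \emph{precisely} the term $-M(t)u$. This is, of course, the very reason the generator $B(t)$ in \eqref{eq:operatorB} is defined with the correction $-M(t)u$ rather than as $D_{L(t)}$ alone, and the commuting hypothesis on $M$ is what simultaneously supplies the closed form \eqref{eq:U(t,s)_2}, the matrix cocycle, and the clean derivative formula used throughout.
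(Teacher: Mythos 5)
Your proof is correct and follows essentially the same route as the paper: the factorisation $W(t,s)=U(s,t)\tilde G(t,s)$ into the rotation matrix and the componentwise Ornstein--Uhlenbeck evolution operator, the product rule giving the $-M(t)u$ term, the cocycle identity $U(r,t)U(s,r)=U(s,t)$, and continuity inherited from both factors. Your only (welcome) refinement is to make explicit that moving constant matrices across $D_{G(t,s)}$ and $D_{L(t)}$ needs only linearity, whereas the paper phrases this as $D_{L(t)}-M(t)$ commuting with multiplication by $U(s,t)$ via the exponential formula (\ref{eq:U(t,s)_2}).
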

\begin{proof}
For $u\in L^p(\R^d)^d$, $t> s$, and
$x\in\R^d$ we define the operator $\tilde G(t,s)$ by
$$
\tilde G(t,s)u(x) := \frac{1}{(4\pi)^{\frac d 2}(\det Q_{t,s})^{\frac 1 2}}
\int_{\R^d} u(U(t,s)x+g(t,s)-y) \e^{-\frac 1 4 \langle
Q_{t,s}^{-1}y,y \rangle} \d y.
$$
This is just the Ornstein-Uhlenbeck evolution system from Proposition \ref{prop:OUevolsyst}  applied in each
component of the function $u=(u_1,\ldots,u_d)$. Thus, $\{\tilde
G(t,s): 0 \leq s \leq t\}$ is
an evolution system on $L^p(\R^d)^d$ such that 
$$
\frac{\partial}{\partial t} \tilde
G(t,s)\varphi = D_{L(t)}\tilde
G(t,s)\varphi
$$
holds for every  $\varphi \in C_c^\infty(\R^d)^d$. 
Note that for $u\in L^p(\R^d)^d$ and $t\geq s$ we can
write\footnote{To be precise, $U(s,t)$ has to be interpreted here as
a multiplication operator.} $W(t,s)u=U(s,t)\tilde G(t,s)u$. By
applying the product rule we obtain
\begin{align}
\frac{\partial}{\partial t}W(t,s)u &= \frac{\partial}{\partial t}U(s,t)\tilde G(t,s)u \notag\\
& = U(s,t) D_{L(t)}\tilde G(t,s)u -
U(s,t)M(t)\tilde G(t,s)u \nonumber \\
& = B(t)W(t,s)u, \nonumber
\end{align}
for every $u\in C_c^{\infty}(\R^d)^d$. We have used that
$D_{L(t)}-M(t)$ commutes with the multiplication by $U(s,t)$, which
can be easily seen, as $U(t,s)$ is given by (\ref{eq:U(t,s)_2}). Thus for every $u\in C_c^\infty(\R^d)$ the solution to equation (\ref{eq:prop_evolution_W_nACP}) is indeed given by $W(t,s)u$.

The law of evolution follows from a similar calculation. For $0\leq s\leq r\leq t$ we have
\begin{align}
W(t,r)W(r,s)u & = U(r,t)\tilde G(t,r)\left(U(s,r)\tilde G(r,s)u \right) \nonumber \\
& = U(r,t)U(s,r)\tilde G(t,r)\tilde G(r,s)u \notag\\
& = W(t,s)u. \nonumber
\end{align}
Here we have used $U(r,t)U(s,r)=U(s,t)$, which also can 
be seen from (\ref{eq:U(t,s)_2}). 

The strong continuity of $(t,s) \mapsto W(t,s)$ follows directly
from the strong continuity of $(t,s) \mapsto U(s,t)$ and
$(t,s)\mapsto \tilde G(t,s)$. This completes the proof.
\end{proof}
By the Proposition \ref{prop:EvolSyst}
 $\{W(t,s):0\leq s\leq t\}$ is an evolution system on $L^p(\R^d)^d$. However, later in Section 4 we shall not work on
$L^p(\R^d)^d$ but, as usual in the theory of the Navier-Stokes equations, on $L^p_{\sigma}(\R^d)$, the space of all solenoidal vector fields in $L^p(\R^d)^d$. Therefore we also consider the operators
$A(t)$, $t>0$, in $L^p_{\sigma}(\R^d)$ defined by
\begin{equation}\label{eq:operatorB}
\begin{array}{rcl}
A(t) & := & B(t)|_{L^p_{\sigma}(\R^d)},\\
\D(A(t))&:=&\D(B(t))\cap L^p_\sigma(\R^d),
\end{array}
\end{equation}
i.e., $A(t)$ is the restriction of $B(t)$ to $L^p_{\sigma}(\R^d)$. 
To ensure that this definition really makes sense we have to show
that the operators $B(t)$, $t>0$, leave $L^p_\sigma (\R^d)$ invariant. An easy calculation shows
that
\begin{equation}
\div\{M(t)x \cdot \nabla u + f(t)\cdot \nabla u - M(t)u\} = 0
\end{equation}
holds for all $u\in C_{c,\sigma}^\infty(\R^d)$. Thus, $A(t)$, $t>0$, is indeed a linear operator acting on $L^p_{\sigma}(\R^d)$. Similarly, we can show that  
\begin{equation}\label{eq:invariant}
\div \left(U(s,t)\cdot u(U(t,s)x+g(t,s)\right) = \left( \div 
u\right) (U(t,s)x+g(t,s)) = 0
\end{equation}
holds for all $u\in C_{c,\sigma}^\infty(\R^d)$. It now easily follows from (\ref{eq:invariant}) that also the evolution system $\{W(t,s):0\leq s \leq t\}$ leaves $L^p_{\sigma}(\R^d)$ invariant. Thus we can define a family of operators on $L^p_{\sigma}(\R^d)$ by setting 
$$
V(t,s) =W(t,s)|_{L^p_{\sigma}(\R^d)},\qquad \qquad 0\leq s\leq t,
$$
i.e., $V(t,s)$ is just the restriction of $W(t,s)$ to $L^p_\sigma (\R^d)$. The next result now follows directly from Proposition \ref{prop:EvolSyst}. 
\begin{prop}
Let $1<p<\infty$. The two parameter family of bounded linear operators $\{V(t,s): 0\leq s\leq t\}$
defines an evolution system on $L^p_\sigma(\R^d)$.
Moreover, for any initial value $\varphi \in C_{c,\sigma}^\infty(\R^d)$, the abstract non-autonomous Cauchy problem
\begin{equation}\label{eq:prop_evolution_W_nACP}\left\{
\begin{array}{rclll}
u'(t)& = & A(t)u(t), & 0\leq s < t, \\[0.1cm]
u(s) & = & \varphi,
\end{array}\right.
\end{equation}
admits a classical solution $u$ given by $u(t) = V(t,s)\varphi$.
\end{prop}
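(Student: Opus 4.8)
The plan is to obtain each assertion simply by restricting the corresponding statement of Proposition \ref{prop:EvolSyst} to the closed subspace $L^p_\sigma(\R^d)\subset L^p(\R^d)^d$. Two facts established above make this possible: the invariance relation (\ref{eq:invariant}) shows that each $W(t,s)$ maps $L^p_\sigma(\R^d)$ into itself, so that $V(t,s):=W(t,s)|_{L^p_\sigma(\R^d)}$ is a well-defined bounded operator on $L^p_\sigma(\R^d)$; and $A(t)$ is by definition the restriction $B(t)|_{L^p_\sigma(\R^d)}$ with domain $\D(B(t))\cap L^p_\sigma(\R^d)$.

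The algebraic evolution-system identities then follow at once. Since $W(s,s)=\Id$ we get $V(s,s)=\Id$ on $L^p_\sigma(\R^d)$; and for $\varphi\in L^p_\sigma(\R^d)$ and $0\le s\le r\le t$, invariance yields $V(r,s)\varphi=W(r,s)\varphi\in L^p_\sigma(\R^d)$, so the identity $W(t,s)=W(t,r)W(r,s)$ of Proposition \ref{prop:EvolSyst}(i) restricts to $V(t,s)\varphi=V(t,r)V(r,s)\varphi$, proving (i). For (ii), fix $\varphi\in L^p_\sigma(\R^d)$; the map $(t,s)\mapsto V(t,s)\varphi=W(t,s)\varphi$ is continuous into $L^p(\R^d)^d$ by Proposition \ref{prop:EvolSyst}(ii), and since all its values lie in the \emph{closed} subspace $L^p_\sigma(\R^d)$, it is continuous into $L^p_\sigma(\R^d)$. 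Hence $\{V(t,s):0\le s\le t\}$ is an evolution system on $L^p_\sigma(\R^d)$.

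For the Cauchy problem, let $\varphi\in C_{c,\sigma}^\infty(\R^d)\subset C_c^\infty(\R^d)^d$. By Proposition \ref{prop:EvolSyst}, $u(t)=W(t,s)\varphi$ is a classical solution of $u'=B(t)u$, $u(s)=\varphi$, in $L^p(\R^d)^d$, so $u(t)\in\D(B(t))$ for $t>s$; invariance gives $u(t)=V(t,s)\varphi\in L^p_\sigma(\R^d)$, whence $u(t)\in\D(B(t))\cap L^p_\sigma(\R^d)=\D(A(t))$ and $B(t)u(t)=A(t)u(t)$. The one point demanding care---and the main, if modest, obstacle---is to see that this $B(t)$-solution descends to a classical solution in $L^p_\sigma(\R^d)$: because $L^p_\sigma(\R^d)$ is closed, the difference quotients of $u$ and their limit $u'(t)$ all lie in $L^p_\sigma(\R^d)$, so the identity $u'(t)=B(t)u(t)=A(t)u(t)$ is valid in $L^p_\sigma(\R^d)$. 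Together with the operator-level invariance $B(t)\big(\D(B(t))\cap L^p_\sigma(\R^d)\big)\subset L^p_\sigma(\R^d)$ supplied by the divergence computations preceding the statement, this shows that $u(t)=V(t,s)\varphi$ is the desired classical solution.
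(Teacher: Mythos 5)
Your proposal is correct and follows exactly the paper's route: the paper establishes the invariance of $L^p_\sigma(\R^d)$ under $B(t)$ and $W(t,s)$ immediately before the statement and then simply remarks that the proposition ``follows directly from Proposition \ref{prop:EvolSyst}'', which is precisely the restriction argument you carry out. Your additional observation that the closedness of $L^p_\sigma(\R^d)$ forces the difference quotients and their limit $u'(t)$ to stay in the subspace is a clean way of making the paper's implicit step explicit.
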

This shows that the Stokes problem corresponding to equation (\ref{eq:NS_1}) is solved by the evolution system $\{V(t,s): 0\leq s \leq t\}$ on $L^p_\sigma(\R^d)$.  Next we prove $L^p$-$L^q$ and gradient estimates for this evolution system. Since the evolution system  is not of parabolic type in the sense of Tanabe or  Acquistapace, Terreni, gradient estimates do not follow from the general theory. However, the explicit formula for $V(t,s)$ allows us
to obtain the following result.
\begin{prop}
Let $1<p<\infty$ and $p\leq q \leq \infty$.
\begin{itemize}
\item[(a)] For $T>0$ there exists a constant $C>0$ such that for $u\in L^p_{\sigma}(\R^d)$
\begin{align}
\|V(t,s)u\|_{L_{\sigma}^q(\R^d)} \leq C
(t-s)^{-\frac{d}{2}\left(\frac{1}{p}-\frac{1}{q}\right)}\|u\|_{L_{\sigma}^p(\R^d)},&
\quad \mbox{for} \quad 0\leq s<t\leq T,\label{eq:LpLqsmoothing}\\
  \|\nabla V(t,s)u\|_{L^q(\R^d)} \leq C
(t-s)^{-\frac{d}{2}\left(\frac{1}{p}-\frac{1}{q}\right)-\frac{1}{2}}\|u\|_{L_{\sigma}^p(\R^d)},
&\quad \mbox{for} \quad 0\leq s<t\leq T. \label{eq:GradientEstimate}
\end{align}
\item[(b)] Assume in addition that $M(t)$ is skew-symmetric for all $t>0$. Then there exists a constant
$C>0$ such that  for $u\in L^p_{\sigma}(\R^d)$
\begin{align}
\|V(t,s)u\|_{L_{\sigma}^q(\R^d)} \leq C
(t-s)^{-\frac{d}{2}\left(\frac{1}{p}-\frac{1}{q}\right)}\|u\|_{L_{\sigma}^p(\R^d)},&
\quad \mbox{for} \quad 0\leq s<t,\label{eq:LpLqsmoothing2}\\
\|\nabla V(t,s)u\|_{L^q(\R^d)} \leq C
(t-s)^{-\frac{d}{2}\left(\frac{1}{p}-\frac{1}{q}\right)-\frac{1}{2}}\|u\|_{L_{\sigma}^p(\R^d)},
&\quad \mbox{for}\quad 0\leq s<t.\label{eq:GradientEstimate2}
\end{align}
\end{itemize}
\end{prop}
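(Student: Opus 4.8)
The plan is to reduce everything to the scalar Gaussian convolution and then invoke the estimates of Lemma \ref{lemma:Qts_Estimates}. Since $V(t,s)$ is the restriction of $W(t,s)$ and the latter leaves $L^p_\sigma(\R^d)$ invariant, for solenoidal $u$ the function $V(t,s)u$ is again solenoidal, so the $L^q_\sigma$-norm on the left is just the $L^q$-norm; hence it suffices to work with $W(t,s)$ on $L^p(\R^d)^d$. Recalling from the proof of Proposition \ref{prop:EvolSyst} the factorization $W(t,s)u = U(s,t)\tilde G(t,s)u$ and from the proof of Lemma \ref{lemma:OU_bounded_operators} the componentwise convolution representation
$$
\tilde G(t,s)u(x) = (u\ast k_{t,s})(U(t,s)x + g(t,s)), \qquad k_{t,s}(x) = \frac{1}{(4\pi)^{\frac d2}(\det Q_{t,s})^{\frac12}}\e^{-\frac14\langle Q_{t,s}^{-1}x,x\rangle},
$$
I would estimate $\|W(t,s)u\|_{L^q} \leq \|U(s,t)\|\,\|\tilde G(t,s)u\|_{L^q}$, absorb the affine change of variables $x \mapsto U(t,s)x + g(t,s)$ into a factor $|\det U(t,s)|^{-1/q}$, and apply Young's inequality with the kernel exponent $r$ determined by $\tfrac1r = 1 - (\tfrac1p - \tfrac1q)$.

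For the $L^p$-$L^q$ smoothing the remaining task is to compute $\|k_{t,s}\|_{L^r}$. Substituting $x = Q_{t,s}^{1/2}w$ turns the Gaussian integral into a dimensional constant times $(\det Q_{t,s})^{\frac12(\frac1r - 1)}$, and since $\tfrac1r - 1 = -(\tfrac1p - \tfrac1q)$ this yields $\|k_{t,s}\|_{L^r} = C_{r,d}\,(\det Q_{t,s})^{-\frac12(\frac1p-\frac1q)}$. Inserting the lower bound $(\det Q_{t,s})^{1/2} \geq C(t-s)^{d/2}$ from Lemma \ref{lemma:Qts_Estimates}(ii) gives exactly the exponent $-\tfrac d2(\tfrac1p - \tfrac1q)$, proving (\ref{eq:LpLqsmoothing}). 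For the gradient estimate I would use $\nabla(u\ast k_{t,s}) = u \ast \nabla k_{t,s}$ together with the chain rule for the affine substitution (which produces a harmless $U(t,s)^{*}$ factor), reducing matters to $\|\nabla k_{t,s}\|_{L^r}$. Here $\nabla k_{t,s}(x) = -\tfrac12 Q_{t,s}^{-1}x\,k_{t,s}(x)$, so bounding $\|Q_{t,s}^{-1}x\| \leq \|Q_{t,s}^{-1/2}\|\,\|Q_{t,s}^{-1/2}x\|$ and again substituting $x = Q_{t,s}^{1/2}w$ produces an extra factor $\|Q_{t,s}^{-1/2}\|$ multiplying the same power of $\det Q_{t,s}$. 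Lemma \ref{lemma:Qts_Estimates}(i),(ii) then contributes the additional $(t-s)^{-1/2}$, giving (\ref{eq:GradientEstimate}).

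It remains to control the matrix factors. On a finite interval $[0,T]$ the continuity of $(t,s) \mapsto U(t,s)$ guarantees that $\|U(s,t)\|$, $\|U(t,s)\|$ and $|\det U(t,s)|^{\pm 1/q}$ are all bounded by a constant depending only on $T$; collecting these with the kernel bounds completes part (a). For part (b) I would observe that skew-symmetry of $M(t)$ makes every $U(t,s)$ orthogonal, so all these matrix factors equal $1$ and, crucially, $Q_{t,s} = \int_s^t U(r,s)U^*(r,s)\,\d r = (t-s)I$. Consequently $\|Q_{t,s}^{-1/2}\| = (t-s)^{-1/2}$ and $(\det Q_{t,s})^{1/2} = (t-s)^{d/2}$ hold \emph{exactly} and for all $0 \leq s < t$, so the identical computation yields (\ref{eq:LpLqsmoothing2}) and (\ref{eq:GradientEstimate2}) with no restriction to a finite time interval. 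I expect the main obstacle to be the gradient estimate: one must extract precisely the factor $\|Q_{t,s}^{-1/2}\|$ from $\|\nabla k_{t,s}\|_{L^r}$ and verify that its singularity is no worse than $(t-s)^{-1/2}$ uniformly in the relevant range, which is exactly what Lemma \ref{lemma:Qts_Estimates}(i) supplies.
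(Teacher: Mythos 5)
Your proposal is correct and takes essentially the same route as the paper's own proof: the factorization $W(t,s)=U(s,t)\tilde G(t,s)u$ with the Gaussian convolution kernel $k_{t,s}$, Young's inequality with $1+\tfrac1q=\tfrac1p+\tfrac1r$, the substitution $y=Q_{t,s}^{1/2}z$ together with Lemma \ref{lemma:Qts_Estimates} (extracting the extra $\|Q_{t,s}^{-1/2}\|\le C(t-s)^{-1/2}$ for the gradient bound), and for part (b) the observation that orthogonality of $U(t,s)$ gives $Q_{t,s}=(t-s)I$ and trivial matrix factors, removing the restriction to finite time intervals. The only deviations are cosmetic, such as computing $\|k_{t,s}\|_{L^r}$ as a standalone quantity instead of carrying the normalization through the estimate.
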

\begin{proof}
We start by showing (\ref{eq:LpLqsmoothing}). Let $T>0$. By a change
of variables and by Young's inequality we obtain\small
\begin{align*}
& \|V(t,s)u\|_{L_{\sigma}^q(\R^d)} \leq \frac{\|U(s,t)\|}{(4\pi)^{\frac d 2} (\det
Q_{t,s})^{\frac 1 2 }} \, |\det
U(t,s)|^{\frac 1 q}\Big(\int_{\R^d}\big|\e^{-\frac 1 4 \langle
Q_{t,s}^{-1}y,y \rangle}\big|^r \d y\Big)^{\frac 1 r}
\|u\|_{L_{\sigma}^p(\R^d)},
\end{align*}\normalsize
where $1<r<\infty$ with $\frac 1 p + \frac 1 r = 1 + \frac 1 q$.
Further, by the change of variable $y= Q_{t,s}^{1/2} z$ we obtain\small
\begin{align}
\Big(\int_{\R^d}\big|\e^{-\frac 1 4 \langle Q_{t,s}^{-1}y,y
\rangle}\big|^r \d y\Big)^{\frac 1 r} &= \Big(\int_{\R^d}
\e^{-\frac{r |z|^2}{4}} (\det Q_{t,s})^{1/2} \d
z\Big)^{\frac 1 r} \leq C (\det Q_{t,s})^{\frac{1}{2r}}, \nonumber
\end{align}\normalsize
for some constant $C>0$. Now Lemma \ref{lemma:Qts_Estimates} (ii)
yields the assertion.

To prove the gradient estimate (\ref{eq:GradientEstimate}), we first
observe that\small
\begin{align}\label{eq:gradient}
&\nabla V(t,s)u(x)= \frac{U(s,t)}{(4\pi)^{\frac d 2} (\det Q_{t,s})^{\frac 1
2}} \int_{\R^d} u(U(t,s)x+g(t,s)-y) \nabla\e^{-\frac 1 4 \langle Q_{t,s}^{-1}y,y \rangle} U(t,s) \d
y\nonumber 
\end{align}\normalsize
holds. Similarly as above we now obtain the desired estimate\small
\begin{align*}
& \|\nabla V(t,s) u\|_{L^q(\R^d)} \\
& \; \leq \frac{\|U(s,t)\| \|U^*(t,s)\| 
}{(4\pi)^{\frac d 2} (\det Q_{t,s})^{\frac{1}{2}}} |\det U(t,s)|^{\frac 1 q}\Big(\int_{\R^d}
\big| \nabla \e^{- \frac 1 4 \langle
Q_{t,s}^{-1}y,y\rangle}\big|^r \d
y\Big)^{\frac 1 r} \|u\|_{L^p_\sigma(\R^d)}\nonumber \\
& \; \leq \frac{\|U(s,t)\| \|U^*(t,s)\| 
}{(4\pi)^{\frac d 2} (\det Q_{t,s})^{\frac{1}{2}}} |\det U(t,s)|^{\frac 1 q}\Big(\int_{\R^d}
\big|\Big(-\tfrac 1 2Q^{-1}_{t,s}y\Big)\, \e^{- \frac 1 4 \langle
Q_{t,s}^{-1}y,y\rangle}\big|^r \d
y\Big)^{\frac 1 r} \|u\|_{L^p_\sigma(\R^d)}\nonumber \\
& \; \leq \frac{\|U(s,t)\| \|U^*(t,s)\| 
}{(4\pi)^{\frac d 2} (\det Q_{t,s})^{\frac{1}{2}}} |\det U(t,s)|^{\frac 1 q} \|Q_{t,s}^{-\frac 1 2}\|  \Big(\int_{\R^d} |z|^r
\e^{-\frac{r|z|^2}{4}} (\det Q_{t,s})^{\frac 1 2}\d z\Big)^{\frac 1 r} \|u\|_{L^p_\sigma(\R^d)}\nonumber \\
& \;\leq C
(t-s)^{-\frac{d}{2}\left(\frac{1}{p}-\frac{1}{q}\right)-\frac{1}{2}}\|u\|_{L_{\sigma}^p(\R^d)},
\nonumber
\end{align*}\normalsize
for some constant $C>0$. Here we used Lemma
\ref{lemma:Qts_Estimates} (i) and (ii).

In order to prove (\ref{eq:LpLqsmoothing2}) and
(\ref{eq:GradientEstimate2}) we first note, that the fact that
$M(t)$ is skew-symmetric for all $t>0$ implies that the
evolution operator $U(t,s)$ is orthogonal for all $t,s>0$. Thus $\|U(t,s)\|=1$ and $|\det U(t,s)| =1$ holds for all $t,s>0$.
Moreover, we have $Q_{t,s}=(t-s)I$ for all $0<s<t$ and therefore it is trivial
that the estimates in Lemma \ref{lemma:Qts_Estimates} hold for all
$0<s<t$. The estimates (\ref{eq:LpLqsmoothing2}) and
(\ref{eq:GradientEstimate2}) now follow from the calculations above.
\end{proof}
\begin{prop}\label{prop:estimatesEvolutionSys}
For $1<p<q<\infty$ and $u\in L^p_{\sigma}(\R^d)$
\begin{equation}\label{eq:behavior_ts_1}
(t-s)^{\frac d 2 \left(\frac 1 p - \frac 1 q\right)} \|V(t,s)u\|_{L^q_{\sigma}(\R^d)}
\rightarrow 0 \quad \mbox{as} \quad t\rightarrow s \quad \mbox{and}
\end{equation}
\begin{equation}\label{eq:behavior_ts_2}
(t-s)^{\frac 1 2 } \|\nabla V(t,s)u\|_{L^p(\R^d)} \rightarrow 0 \quad
\mbox{as}\quad t\rightarrow s.
\end{equation}
\end{prop}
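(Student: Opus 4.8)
The plan is to combine the uniform smoothing and gradient bounds just established with a density argument; both assertions share the same structure, since in each the singular power of $(t-s)$ supplied by the previous proposition is exactly cancelled by the normalising prefactor, so that the decorated norm stays bounded as $t\to s$, and the convergence to $0$ is then squeezed out using the density of $C^\infty_{c,\sigma}(\R^d)$ in $L^p_\sigma(\R^d)$.

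For (\ref{eq:behavior_ts_1}) I would fix $T>0$ and work on $0\le s<t\le T$. First I would treat $\varphi\in C^\infty_{c,\sigma}(\R^d)$: here $V(t,s)\varphi$ is the classical solution of (\ref{eq:prop_evolution_W_nACP}) with value $\varphi$ at $t=s$, and the explicit formula (\ref{eq:EvolSyst}) together with a dominated-convergence argument (as in the proof of continuity in Proposition \ref{prop:OUevolsyst}) shows $V(t,s)\varphi\to\varphi$ in $L^q(\R^d)$ as $t\to s$; in particular $\|V(t,s)\varphi\|_{L^q_\sigma}$ stays bounded. Since $p<q$ the exponent $\frac d2(\frac1p-\frac1q)$ is strictly positive, so $(t-s)^{\frac d2(\frac1p-\frac1q)}\|V(t,s)\varphi\|_{L^q_\sigma}\to0$. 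Then I would handle a general $u\in L^p_\sigma(\R^d)$ by the splitting $u=\varphi+(u-\varphi)$ with $\varphi\in C^\infty_{c,\sigma}(\R^d)$; applying the smoothing estimate (\ref{eq:LpLqsmoothing}) to the remainder yields
\begin{equation*}
(t-s)^{\frac d2(\frac1p-\frac1q)}\|V(t,s)u\|_{L^q_\sigma}
\le (t-s)^{\frac d2(\frac1p-\frac1q)}\|V(t,s)\varphi\|_{L^q_\sigma}
+ C\,\|u-\varphi\|_{L^p_\sigma}.
\end{equation*}
Taking $\limsup_{t\to s}$ annihilates the first term, leaving $\limsup_{t\to s}(t-s)^{\frac d2(\frac1p-\frac1q)}\|V(t,s)u\|_{L^q_\sigma}\le C\|u-\varphi\|_{L^p_\sigma}$; letting $\varphi\to u$ in $L^p_\sigma$ then forces the $\limsup$ to be $0$, which is exactly (\ref{eq:behavior_ts_1}).

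The argument for (\ref{eq:behavior_ts_2}) is structurally identical, now using the gradient estimate (\ref{eq:GradientEstimate}) with $q=p$, which reduces to $\|\nabla V(t,s)u\|_{L^p}\le C(t-s)^{-1/2}\|u\|_{L^p_\sigma}$, so that $(t-s)^{1/2}\|\nabla V(t,s)u\|_{L^p}$ is uniformly bounded on $0\le s<t\le T$. For $\varphi\in C^\infty_{c,\sigma}(\R^d)$ I would differentiate (\ref{eq:EvolSyst}) under the integral and pass to the limit to obtain $\nabla V(t,s)\varphi\to\nabla\varphi$ in $L^p$, so that the prefactor $(t-s)^{1/2}\to0$ produces the vanishing; the same splitting and density step then extends it to all $u\in L^p_\sigma$.

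I expect the only real work to be the claim that for smooth data the norms stay bounded, i.e. $V(t,s)\varphi\to\varphi$ in $L^q$ and $\nabla V(t,s)\varphi\to\nabla\varphi$ in $L^p$. The subtlety is that the Gaussian kernel in (\ref{eq:EvolSyst}) degenerates as $t\to s$, since its covariance $Q_{t,s}\sim(t-s)I$ collapses; I would therefore first perform the change of variable $y=Q_{t,s}^{1/2}z$ used earlier, turning the kernel into the fixed Gaussian $\e^{-|z|^2/4}$. After this rescaling $U(t,s)x+g(t,s)\to x$ and $U(s,t)\to I$ make the integrand converge pointwise, and the compact support of $\varphi$ (respectively of $\nabla\varphi$) furnishes an integrable dominating function, so dominated convergence applies and the limits claimed above follow.
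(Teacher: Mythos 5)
Your proof is correct and takes essentially the same route as the paper: approximate $u$ by $\varphi\in C^\infty_{c,\sigma}(\R^d)$, control the remainder with the uniform estimates (\ref{eq:LpLqsmoothing}) and (\ref{eq:GradientEstimate}) (whose singularity is exactly cancelled by the prefactor), and show that for smooth data the decorated norms stay bounded so that the positive power of $(t-s)$ forces them to zero. The only cosmetic difference is that the paper obtains this boundedness directly from Young's inequality, namely $\|V(t,s)\varphi\|_{L^q(\R^d)}\leq C\|\varphi\|_{L^q(\R^d)}$ and, after moving the gradient onto the smooth function, $\|\nabla V(t,s)\varphi\|_{L^p(\R^d)}\leq C\|\nabla \varphi\|_{L^p(\R^d)}$, whereas you prove the slightly stronger convergence statements $V(t,s)\varphi\to\varphi$ and $\nabla V(t,s)\varphi\to\nabla\varphi$ by rescaling and dominated convergence; both yield the same conclusion.
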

\begin{proof}
Let $t-s \leq 1$ and $u_n \in C_{c,\sigma}^{\infty}(\R^d) \subset L_{\sigma}^p(\R^d)
\cap L_{\sigma}^q(\R^d)$, $n\in \N$, such that $u_n \rightarrow u$ in
$L^p(\R^d)$ as $n\rightarrow \infty$. The triangle inequality
together with the $L^p$-$L^q$ estimates (\ref{eq:LpLqsmoothing})
imply that there exist constants $C_1, C_2>0$ such that 
\begin{align*}
&(t-s)^{\frac d 2 \left(\frac 1 p - \frac 1 q \right)} \|V(t,s)u\|_{L_{\sigma}^q(\R^d)}\\
& \quad \leq (t-s)^{\frac d 2 \left(\frac 1 p - \frac 1 q \right)}
\|V(t,s)u-V(t,s)u_n\|_{L_{\sigma}^q(\R^d)} + (t-s)^{\frac d 2 \left(\frac 1 p - \frac 1
q
\right)} \|V(t,s)u_n\|_{L_{\sigma}^q(\R^d)}\\
&\quad \leq  C_1 \|u-u_n\|_{L_{\sigma}^p(\R^d)} + C_2 (t-s)^{\frac d 2 \left(\frac 1 p -
\frac 1 q \right)} \|u_n\|_{L_{\sigma}^q(\R^d)} \rightarrow 0,
\end{align*}\normalsize
by letting first $t\rightarrow s$ and then $n\rightarrow \infty$.

Similarly, by using (\ref{eq:LpLqsmoothing}) and
(\ref{eq:GradientEstimate}) we obtain
\begin{align}
&(t-s)^{\frac 1 2} \|\nabla V(t,s)u\|_{L^p(\R^d)} \nonumber \\
&\quad \leq (t-s)^{\frac 1 2}
\|\nabla V(t,s)u-\nabla V(t,s)u_n\|_{L^p(\R^d)} + (t-s)^{\frac 1 2} \|\nabla V(t,s)u_n\|_{L^p(\R^d)} \nonumber \\
&\quad \leq  C_1 \|u-u_n\|_{L_{\sigma}^p(\R^d)} + (t-s)^{\frac 1 2} \|\nabla V(t,s)u_n\|_{L^p(\R^d)}.\label{eq:estimates_EvSys}
\end{align}\normalsize
Since $u_n \in C_{c,\sigma}^{\infty}(\R^d)$, we observe that
\begin{align}
&\nabla V(t,s)u_n(x)\nonumber\\
&\quad= \frac{U(s,t)}{(4\pi)^{\frac d 2} (\det Q_{t,s})^{\frac 1
2}} \int_{\R^d} \nabla u_n(U(t,s)x+g(t,s)-y) \e^{-\frac 1 4 \langle Q_{t,s}^{-1}y,y \rangle} U(t,s) \d
y\nonumber 
\end{align}\normalsize
holds. Thus, as in the proof of estimate (\ref{eq:LpLqsmoothing}) we now obtain
\begin{equation*}
\|\nabla V(t,s)u_n\|_{L^p(\R^d)} \leq C_2 \|\nabla u_n\|_{L^p(\R^d)}
\end{equation*}
for some constant $C_2$. Now the assertion follows from (\ref{eq:estimates_EvSys}) by letting $t\rightarrow s$ and $n\rightarrow \infty$.
\end{proof}
\section{The Navier-Stokes Flow}
By applying the Helmholtz-Leray projection $\mathbb{P}$ to (\ref{eq:NS_1}) the pressure $\p$
can be eliminated and we may rewrite the equations as a
non-autonomous Cauchy problem\vspace{0.2cm}
\begin{equation}\label{eq:NS_abstract}
\left\{
\begin{array}{rclll}
u'(t)- A(t)u(t) + \mathbb{P}( (u(t) \cdot \nabla) u(t) )&=&0, & \text{for } t>0,  \\[0.1cm]
u(0)&=&u_0,&
\end{array}\right.\vspace{0.2cm}
\end{equation}
with initial value $u_0 \in L^p_{\sigma}(\R^d)$. By the Duhamel
principle this problem is reduced to the integral equation
\begin{equation}\label{eq:Duhamel}
u(t) = V(t,0)u_0 - \int_0^t V(t,s) \mathbb P((u(s) \cdot \nabla)
u(s)) \d s, \quad t>0,
\end{equation}
in $L^p_{\sigma}(\R^d)$. In the following, given $0<T_0\leq \infty$,
we call $u\in C([0,T_0);L^p_{\sigma}(\R^d))$ a
\textit{mild solution} of (\ref{eq:NS_abstract}) if $u$ satisfies
the integral equation (\ref{eq:Duhamel}) on  $[0,T_0)$. By adjusting
Kato's iteration scheme (\cite{Kato:1984,Giga:1986}) to our
situation we now prove the existence of a unique (local) mild solution.\vspace{0.05cm}
\begin{prop}\label{prop:Kato}
Let $2\leq d\leq p \leq q < \infty$ such that $d\neq q$ and $u_0\in
L^p_{\sigma}(\R^d)$. Then there exists $T_0>0$ and a unique mild
solution $u\in C([0,T_0);L^p_{\sigma}(\R^d))$ of
(\ref{eq:NS_abstract}), which has the properties
\begin{equation}\label{eq:mild_solution_prop1}
t^{\frac{d}{2}\left(\frac{1}{p}-\frac{1}{q}\right)}u(t) \in
C([0,T_0);L^q_{\sigma}(\R^d)) ,
\end{equation}
\begin{equation}\label{eq:mild_solution_prop2}
t^{\frac{d}{2}\left(\frac{1}{p}-\frac{1}{q}\right)+\frac{1}{2}}\nabla
u(t) \in C([0,T_0);L^q(\R^d)^{d\times d});
\end{equation}
if $p<q$, then
\begin{equation}\label{eq:mild_solution_prop3}
t^{\frac{d}{2}\left(\frac{1}{p}-\frac{1}{q}\right)}\|u(t)\|_{L^q(\R^d)} +
t^{\frac{1}{2}}\|\nabla u(t)\|_{L^p(\R^d)} \rightarrow 0 \quad \mbox{as\;} t
\rightarrow 0 .
\end{equation}
\end{prop}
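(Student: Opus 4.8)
The plan is to recast the integral equation \eqref{eq:Duhamel} as a fixed-point problem and to solve it by Banach's contraction principle in a Kato-type space of time-weighted functions, feeding in the $L^p$-$L^q$ smoothing and gradient estimates \eqref{eq:LpLqsmoothing}, \eqref{eq:GradientEstimate} for the evolution system $V(t,s)$. Set $\alpha := \frac d2\big(\frac1p-\frac1q\big)$ and define the nonlinear map $\Phi u(t):=V(t,0)u_0-\mathcal B(u,u)(t)$, where $\mathcal B(u,v)(t):=\int_0^t V(t,s)\,\mathbb P\big((u(s)\cdot\nabla)v(s)\big)\,\d s$; a mild solution of \eqref{eq:NS_abstract} is precisely a fixed point of $\Phi$. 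I would work in the space $X_T$ of functions $u$ on $(0,T)$ with norm $\|u\|_{X_T}:=\sup_{0<t<T}\big(\|u(t)\|_{L^p(\R^d)}+t^{\alpha}\|u(t)\|_{L^q(\R^d)}+t^{1/2}\|\nabla u(t)\|_{L^p(\R^d)}\big)$, and, when $p<q$, inside its closed subspace $X_T^{0}$ of those $u$ for which $t^{\alpha}\|u(t)\|_{L^q}$ and $t^{1/2}\|\nabla u(t)\|_{L^p}$ tend to $0$ as $t\to0$; this subspace is natural in view of \eqref{eq:behavior_ts_1}, \eqref{eq:behavior_ts_2}.

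The heart of the matter is a bilinear estimate of the form $\|\mathcal B(u,v)\|_{X_T}\le C\,\eta(T)\,\|u\|_{X_T}\|v\|_{X_T}$. Putting $\frac1r:=\frac1p+\frac1q$, the hypotheses $d\le p\le q$ and $d\ne q$ guarantee $1<r<\infty$, so that the Helmholtz projection $\mathbb P$ is bounded on $L^r_\sigma(\R^d)$ and, by Hölder's inequality, $\|\mathbb P((u(s)\cdot\nabla)v(s))\|_{L^r}\le C\|u(s)\|_{L^q}\|\nabla v(s)\|_{L^p}\le C\,\|u\|_{X_T}\|v\|_{X_T}\,s^{-\alpha-\frac12}$. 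Inserting this bound into \eqref{eq:LpLqsmoothing} and \eqref{eq:GradientEstimate} (with source exponent $r$ in place of $p$) and integrating in $s$ leads, for each of the three components of the norm, to a time integral $\int_0^t(t-s)^{-\beta}s^{-\alpha-\frac12}\,\d s=C\,t^{1/2-\frac d{2p}}$ with $\beta\in\{\frac d{2p},\frac d{2q},\frac d{2q}+\frac12\}$. These integrals converge exactly because $\alpha<\frac12$ and $\frac d{2q}+\frac12<1$; the latter is where $d\ne q$ is used, since together with $q\ge d$ it forces $q>d$, while the former follows from $p\ge d$. Since $p\ge d$ gives $\tfrac12-\tfrac d{2p}\ge0$, one obtains $\eta(T)=T^{1/2-d/(2p)}$. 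The linear part is handled by the same two estimates together with the strong continuity of $V(t,s)$, yielding $\|V(\cdot,0)u_0\|_{X_T}\le C\|u_0\|_{L^p(\R^d)}$.

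With these two estimates in hand the contraction argument is standard, but it splits into two regimes. If $p>d$ then $\eta(T)=T^{1/2-d/(2p)}\to0$ as $T\to0$, so choosing $\rho:=2\|V(\cdot,0)u_0\|_{X_T}\le 2C\|u_0\|_{L^p}$ and $T$ small makes $\Phi$ map the closed ball of radius $\rho$ into itself and act there as a contraction, and Banach's theorem produces a unique fixed point. If $p=d$ (the scaling-critical case, in which $d\ne q$ forces $p<q$), the factor $\eta(T)$ is constant and gives no smallness; instead I would first fix $\rho$ with $2C\rho<1$ and then invoke \eqref{eq:behavior_ts_1}, \eqref{eq:behavior_ts_2} to pick $T$ so small that $\|V(\cdot,0)u_0\|_{X_T^{0}}\le\rho/2$, which is possible because $t^{\alpha}\|V(t,0)u_0\|_{L^q}+t^{1/2}\|\nabla V(t,0)u_0\|_{L^p}\to0$ as $t\to0$. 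On the closed ball of $X_T^{0}$ the map $\Phi$ is again a self-map and a contraction, so a unique fixed point exists. Uniqueness then follows from the same bilinear estimate applied to the difference of two solutions on a short interval, followed by a continuation argument.

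The qualitative properties \eqref{eq:mild_solution_prop1}--\eqref{eq:mild_solution_prop3} are finally read off from the fixed point $u$. The memberships $t^{\alpha}u\in C([0,T_0);L^q_\sigma(\R^d))$ and $t^{\alpha+1/2}\nabla u\in C([0,T_0);L^q(\R^d)^{d\times d})$ follow by applying the weighted bounds to both $V(t,0)u_0$ (using \eqref{eq:LpLqsmoothing}, \eqref{eq:GradientEstimate} and strong continuity) and $\mathcal B(u,u)$, while the vanishing \eqref{eq:mild_solution_prop3} when $p<q$ is inherited from \eqref{eq:behavior_ts_1}, \eqref{eq:behavior_ts_2} for the linear term and from the surplus power $t^{1/2-d/(2p)}$ (respectively from the smallness of $\sup_{0<s<t}(\cdots)$ when $p=d$) for the bilinear term. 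The step I expect to be the main obstacle is precisely the critical case $p=d$: there smallness of $T$ alone does not close the contraction, so one must genuinely exploit the ``little-$o$'' decay of the linearized evolution system supplied by \eqref{eq:behavior_ts_1}, \eqref{eq:behavior_ts_2}, and one must separately verify continuity of $u$ up to $t=0$ in $L^p_\sigma(\R^d)$, i.e. that $\mathcal B(u,u)(t)\to0$ in $L^p$, which again relies on the vanishing of these weighted quantities rather than on a positive power of $t$.
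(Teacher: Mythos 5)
Your plan is, in substance, the paper's own proof: what you package as a bilinear estimate plus Banach's fixed point theorem is precisely Kato's iteration scheme, which the paper runs explicitly. The weighted quantities in your norm are the paper's $K_j=\sup_t t^{\gamma}\|u_j(t)\|_{L^q}$ and $K_j'=\sup_t t^{1/2}\|\nabla u_j(t)\|_{L^p}$, the H\"older exponent $\tfrac1r=\tfrac1p+\tfrac1q$, the Beta-type integral producing the factor $T^{\frac12-\frac{d}{2p}}$, the dichotomy between $p>d$ (smallness from the time power) and $p=d$ (smallness from Proposition \ref{prop:estimatesEvolutionSys}), and the short-interval-plus-continuation uniqueness argument are all identical. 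So this is not a genuinely different route, only a cleaner repackaging; nevertheless there is one step that fails as written.

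In the critical case $p=d$ you propose to fix $\rho$ with $2C\rho<1$ and choose $T$ so small that $\|V(\cdot,0)u_0\|_{X_T^0}\le\rho/2$, justified by \eqref{eq:behavior_ts_1} and \eqref{eq:behavior_ts_2}. But your norm on $X_T$ (hence on its closed subspace $X_T^0$) contains the component $\sup_{0<t<T}\|u(t)\|_{L^p(\R^d)}$, and by the strong continuity of the evolution system $\|V(t,0)u_0\|_{L^p(\R^d)}\to\|u_0\|_{L^p(\R^d)}$ as $t\to0$, so that $\|V(\cdot,0)u_0\|_{X_T^0}\ge\|u_0\|_{L^p(\R^d)}$ for every $T>0$: no choice of $T$ makes this small unless the data are small, and the limits \eqref{eq:behavior_ts_1}, \eqref{eq:behavior_ts_2} control only the other two components. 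The repair is exactly what the paper's bookkeeping does implicitly: run the contraction purely in the seminorm $[u]_T:=\sup_{0<t<T}\bigl(t^{\alpha}\|u(t)\|_{L^q(\R^d)}+t^{1/2}\|\nabla u(t)\|_{L^p(\R^d)}\bigr)$ --- your bilinear estimate both consumes and controls only this seminorm, never the $L^p$ component of the inputs --- and recover $u\in C([0,T_0);L^p_{\sigma}(\R^d))$ afterwards from \eqref{eq:Duhamel}, as you yourself indicate in your last paragraph. A second, smaller point: your claim that \eqref{eq:mild_solution_prop2} can simply be ``read off'' from the fixed point breaks down when $p=d$, because the relevant kernel in the Duhamel term is then $(t-s)^{-\frac{d}{2p}-\frac12}=(t-s)^{-1}$, which is not integrable; the paper glosses over this very point as well, but it does require an additional argument (for instance restarting the integral equation at positive times from $u(t_0)\in L^q_{\sigma}(\R^d)$ with $q>d$) rather than the direct estimate you invoke.
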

\begin{rem}
In the case $p>d$, property (\ref{eq:mild_solution_prop3}) is not
necessary to guarantee the uniqueness of the mild solution $u$.
\end{rem}
\begin{proof}[Proof of Proposition \ref{prop:Kato}]
Let $q>p\geq d$ or $q\geq p >d$ and take $u_0 \in
L^p_{\sigma}(\R^d)$ and $T>0$. We set $u_1(t)=V(t,0)u_0$ and for
$j\geq 1$ and $t>0$ we define a recursion by
\begin{equation}\label{eq:Iteration}
u_{j+1}(t)=V(t,0)u_0 - \int_0^t V(t,s)\mathbb P(( u_j(s) \cdot
\nabla) u_j(s))\d s.
\end{equation}
Our aim is to show that for some $0 <T_0 \leq T$, this sequence
converges in $C([0,T_0);L^p_{\sigma}(\R^d))$ to a solution $u$ of
(\ref{eq:Duhamel}).

We set $\gamma = \frac{d}{2}\left(\frac 1 p - \frac 1 q\right)$ and
for $j\geq 1$ we define constants
$$
K_j := K_j(T_0):=\sup_{0<t\leq T_0} t^{\gamma} \|u_j\|_{L^q(\R^d)}, 
$$
$$
K_j':=K_j'(T_0) := \sup_{0<t\leq T_0} t^{\frac{1}{2}} \|\nabla
u_j\|_{L^p(\R^d)}
$$
and
$$
L_j:= L_j(T_0):=\sup_{0<t\leq T_0}
t^{\gamma}\|u_{j+1}(t)-u_j(t)\|_{L^q(\R^d)}, 
$$
$$
L_j':=
L_j'(T_0):=\sup_{0<t\leq T_0} t^{\frac{1}{2}}\|\nabla u_{j+1}(t)-
\nabla u_j(t)\|_{L^p(\R^d)}.
$$
Moreover, we set $R_j:=R_j(T_0):=\max\{K_j,K_j'\}$ . Note that the
$L^p$-$L^q$ estimates (\ref{eq:LpLqsmoothing}) and the gradient
estimates (\ref{eq:GradientEstimate}) yield
$R_1\leq C\|u_0\|_{L^p(\R^d)}$ for some constant $C>0$.

From (\ref{eq:Iteration}), the $L^r$-$L^q$ estimates
(\ref{eq:LpLqsmoothing}) and the boundedness of $\mathbb P$ from
$L^r(\R^d)^d$ into $L^r_{\sigma}(\R^d)$ it follows that
\begin{align}
&\|u_{j+1}(t)\|_{L^q(\R^d)} \nonumber\\
&\qquad \leq \|V(t,0)u_0\|_{L^q(\R^d)} + \int_0^t \| V(t,s)\mathbb P(( u_j(s) \cdot \nabla) u_j(s)) \|_{L^q(\R^d)} \d s\nonumber\\
& \qquad \leq t^{-\gamma}K_1 + C \int_0^t (t-s)^{-\frac{d}{2}\left(\frac 1
r - \frac{1}{q}\right)}\|( u_j(s) \cdot \nabla) u_j(s)\|_{L^r(\R^d)} \d
s,\label{eq:proofKato1}
\end{align}
holds, where $\frac 1 r=\frac{1}{p}+\frac 1 q$. Similarly, with the
gradient estimate (\ref{eq:GradientEstimate}) we obtain
\begin{align}
&\|\nabla u_{j+1}(t)\|_{L^p(\R^d)} \nonumber\\
&\qquad \leq \|\nabla V(t,0)u_0\|_{L^p(\R^d)} + \int_0^t \| \nabla V(t,s)\mathbb P(( u_j(s) \cdot \nabla) u_j(s)) \|_{L^p(\R^d)} \d s\nonumber\\
& \qquad \leq t^{-\frac{1}{2}}K_1' + C \int_0^t
(t-s)^{-\frac{d}{2}\left(\frac 1 r - \frac{1}{p}\right)-\frac 1
2}\|( u_j(s) \cdot \nabla) u_j(s)\|_{L^r(\R^d)} \d s.\label{eq:proofKato2}
\end{align}
In order to estimate the terms on the right hand side of the
inequalities (\ref{eq:proofKato1}) and (\ref{eq:proofKato2}), we
apply H\"older's inequality to conclude
\begin{equation}\label{eq:Hoelder}
\|( u_j(s) \cdot \nabla) u_j(s)\|_{L^r(\R^d)}  \leq \|u_j(s)\|_{L^q(\R^d)} \|\nabla
u_j(s)\|_{L^p(\R^d)} \leq K_jK_j's^{-\gamma - \frac 1 2}.
\end{equation}
This implies
\begin{equation}\label{eq:proofKato3}
\|u_{j+1}(t)\|_{L^q(\R^d)} \leq  t^{- \gamma}K_1 + C K_jK_j'  \int_0^t
(t-s)^{- \frac{d}{2p}} s^{-\gamma - \frac 1 2} \d s,
\end{equation}
and
\begin{equation}\label{eq:proofKato4}
\|\nabla u_{j+1}(t)\|_{L^p(\R^d)} \leq  t^{- \frac{1}{2}}K_1' + C K_jK_j'
\int_0^t (t-s)^{- \frac{d}{2q}-\frac1 2}  s^{-\gamma - \frac 1 2} \d
s,
\end{equation}
respectively. By multiplying inequality (\ref{eq:proofKato3}) with
$t^{\gamma}$ and inequality (\ref{eq:proofKato4}) with $t^{\frac 1
2}$ and then by taking $\sup_{0<t\leq T_0}$ we obtain
\begin{equation}\label{eq:proofKato5}
K_{j+1}\leq K_1 + C_1 K_jK_j' \quad \mbox{and} \quad K'_{j+1}\leq
K_1' + C_2 K_jK_j'
\end{equation}
for some positive constants $C_1, C_2$ independent of
$j$, but depending on $T$. Here we have used the
estimate
\begin{align*}
\int_0^t (t-s)^{- \alpha}s^{-\beta}\d s & = \int_{t/2}^t (t-s)^{- \alpha}s^{-\beta}\d s + \int_0^{t/2} (t-s)^{- \alpha}s^{-\beta}\d s \\
& \leq \left(\frac{t}{2}\right)^{-\beta} \int_{t/2}^t (t-s)^{- \alpha} \d s+\left(\frac{t}{2}\right)^{-\alpha} \int_0^{t/2} s^{-\beta}\d s \\
& \leq
\left(\frac{t}{2}\right)^{1-\beta-\alpha}\left(\frac{1}{1-\alpha}+\frac{1}{1-\beta}\right),
\end{align*}
for exponents $0<\alpha,\beta<1$.

From (\ref{eq:proofKato5}) it now follows that $R_{j+1} \leq R_1 +
\delta R_j^2$ holds, for some positive constant $\delta\geq 1$.  If
we assume $R_1 \leq \frac{1}{6 \delta}$, then inductively we obtain
$R_j \leq 2 R_1$. From Proposition \ref{prop:estimatesEvolutionSys}
it follows that for any $\lambda>0$, there exists $T_0 >0$ such that
$R_1<\lambda$. Thus we obtain a bound for $R_j$ uniformly in $j$,
provided $T_0$ is small enough. Using this uniform bound for $R_j$,
it follows that the sequences
$$
(t\mapsto t^{\gamma}u_j(t) )_{j\geq 1} \quad \mbox{and} \quad
(t\mapsto t^{\gamma+\frac{1}{2}}\nabla u_j(t) )_{j\geq 1}
$$
are uniformly bounded in $L^q_{\sigma}(\R^d)$
and $L^q(\R^{d})^{d\times d}$ respectively for $t\in[0,T_0]$ and all $j
\in \N$. Moreover, from (\ref{eq:behavior_ts_1}) and
(\ref{eq:behavior_ts_2}) we can conclude that the maps $t\mapsto
t^{\gamma}u_1(t)$ and $t\mapsto t^{\frac 1 2} \nabla u_1(t)$ are 
continuous at $t=0$. The continuity of $t\mapsto t^{\gamma}u_j(t)$
and $t\mapsto t^{\frac 1 2} \nabla u_j(t)$ for $j\geq 1$ now follows
by similar arguments as above.

We now derive estimates for the difference $u_{j+1}-u_j$.  First we
note that
$$
(u_{j}\cdot \nabla)u_{j} - (u_{j-1} \cdot \nabla) u_{j-1} =
(u_{j}\cdot \nabla)(u_{j}-u_{j-1}) + ((u_{j}-u_{j-1})\cdot
\nabla)u_{j-1}
$$
holds. Similarly as above we obtain
\begin{align*}
&\|u_{j+1}(t) - u_j(t)\|_{L^q(\R^d)} \\
&\quad \leq \int_0^t \|V(t,s)\mathbb P((u_{j}(s)\cdot \nabla)u_{j}(s) -(u_{j-1}(s)\cdot \nabla)u_{j-1}(s)) \|_{L^q(\R^d)}\d s \\
&\quad \leq C \int_0^t (t-s)^{-\frac d 2 \left(\frac 1 r - \frac{1}{q}\right)}\|(u_{j}(s)\cdot \nabla)u_{j}(s) -(u_{j-1}(s)\cdot \nabla)u_{j-1}(s)  \|_{L^r(\R^d)}\d s \\
& \quad\leq C \int_0^t (t-s)^{-\frac{d}{2p}}\Big(\| u_{j}(s)\|_{L^q(\R^d)} \| \nabla (u_{j}(s) -u_{j-1}(s))\|_{L^p(\R^d)}\\
& \quad\qquad \qquad+ \|u_{j}(s) - u_{j-1}(s)\|_{L^q(\R^d)} \|\nabla
u_{j-1}(s)\|_{L^p(\R^d)}\Big) \d s,
\end{align*}
and
\begin{align*}
&\|\nabla u_{j+1}(t) - \nabla u_j(t)\|_{L^p(\R^d)} \\
&\quad \leq \int_0^t \|\nabla V(t,s)\mathbb P((u_{j}(s)\cdot \nabla)u_{j}(s) -(u_{j-1}(s)\cdot \nabla)u_{j-1}(s)) \|_{L^p(\R^d)}\d s \\
& \quad\leq C \int_0^t (t-s)^{-\frac d 2 \left(\frac 1 r - \frac{1}{p}\right)-\frac 1 2}\|(u_{j}(s)\cdot \nabla)u_{j}(s) -(u_{j-1}(s)\cdot \nabla)u_{j-1}(s)  \|_{L^r(\R^d)}\d s \\
& \quad\leq C \int_0^t (t-s)^{-\frac{d}{2q}-\frac 1 2} \Big( \| u_{j}(s)\|_{L^q(\R^d)} \| \nabla (u_{j}(s) -u_{j-1}(s))\|_{L^p(\R^d)}\\
& \quad\qquad \qquad+ \|u_{j}(s) - u_{j-1}(s)\|_{L^q(\R^d)} \|\nabla
u_{j-1}(s)\|_{L^p(\R^d)} \Big) \d s.
\end{align*}
Thus we can conclude
\begin{equation}
L_{j} \leq C_3 (L_{j-1}'K_j  + L_{j-1} K_{j-1}') \leq 2C_3
R_1(L_{j-1}' + L_{j-1})
\end{equation}
and
\begin{equation}
L_{j}' \leq C_4 (L_{j-1}'K_j  + L_{j-1} K_{j-1}') \leq 2C_4
R_1(L_{j-1}' + L_{j-1}) ,
\end{equation}
for some positive constants $C_3,C_4$ independent of $j$, but
depending on $T$. These estimates show that if $R_1$ is sufficiently
small then the sequences $(t\mapsto t^{\gamma}u_j(t) )_{j\geq 1}$ and $(t\mapsto
t^{\gamma + \frac 1 2}\nabla u_j(t) )_{j\geq 1}$ are Cauchy
sequences in the spaces $C([0,T_0);L^q_{\sigma}(\R^d))$ and
$C([0,T_0);L^q(\R^d)^{d\times d})$, respectively. As it was previously mentioned, 
$R_1$ can be made sufficiently small if $\|u_0\|_{L^p(\R^d)}$ is small
enough or if we choose $T_0$ sufficiently small. As a consequence $t\mapsto
t^{\gamma}u_j(t)$ converges to some $t^{\gamma}u(t)\in
C([0,T_0),L^q_{\sigma}(\R^d))$ and $t\mapsto t^{\gamma + \frac 1
2}\nabla u_j(t) $ converges to some $ t^{\gamma + \frac 1 2}v(t)\in
C([0,T_0),L^q(\R^d)^{d\times d})$. It follows directly from the
construction that $v(t)=\nabla u(t)$ and that $u$ satisfies
(\ref{eq:Duhamel}) on $[0,T_0)$. The property
(\ref{eq:mild_solution_prop3}) follows from the construction and
Proposition \ref{prop:estimatesEvolutionSys}. Moreover, by
(\ref{eq:LpLqsmoothing}) and (\ref{eq:Hoelder}) we obtain
$$
\|u(t)\|_{L^p(\R^d)} \leq \|V(t,0)u_0\|_{L^p(\R^d)} + C \int_0^t (t-s)^{-\frac{d}{2q}}
s^{-\gamma - \frac 1 2}\d s,
$$
for some constant $C>0$ and thus $\sup_{0\leq t \leq T_0}\|u(t)\|_{L^p(\R^d)}
< \infty$ holds. The continuity at $0$ can be seen similarly, so $u\in
C([0,T_0);L^p_{\sigma}(\R^d))$.

It remains to prove the uniqueness of a mild solution
$u$ with the mentioned properties. To do this let $u,v$ be two mild solutions of
(\ref{eq:NS_abstract}) satisfying (\ref{eq:mild_solution_prop1}),
(\ref{eq:mild_solution_prop2}) and (\ref{eq:mild_solution_prop3}).
Moreover, let $0<\tilde T \leq T_0$ and define the constant
$K$ as
$$
K:=K(\tilde T) := \max\Big\{ \sup_{0< t \leq \tilde T}t^{\gamma}\|u(t)\|_{L^q(\R^d)}, \;\sup_{0< t \leq \tilde T}t^{\frac 1 2}\|\nabla
v(t)\|_{L^p(\R^d)}\Big\}.
$$
Since $u$ and $v$ both solve the integral equation
(\ref{eq:Duhamel}), we obtain similarly as above \small
\begin{align*}
&\|u(t)-v(t)\|_{L^q(\R^d)} 
 \leq K C \Big(\int_0^t (t-s)^{-\frac{d}{2p}} s^{-\gamma-\frac 1
2} \d s\Big)\cdot \\
& \qquad \quad \sup_{0< \tau \leq \tilde T}\Big(\tau^{\gamma}
\|u(\tau) - v(\tau)\|_{L^q(\R^d)} + \tau^{\frac 1 2}\|\nabla
(u(\tau)-v(\tau))\|_{L^p(\R^d)} \Big),
\end{align*}\normalsize
and\small
\begin{align*}
&\|\nabla u(t) - \nabla v(t)\|_{L^p(\R^d)}  
 \leq K C \Big(\int_0^t (t-s)^{-\frac{d}{2q}-\frac 1 2} s^{-\gamma-\frac 1
2} \d s\Big)\cdot \\
& \qquad \quad \sup_{0< \tau \leq \tilde T}\Big(\tau^{\gamma}
\|u(\tau) - v(\tau)\|_{L^q(\R^d)} + \tau^{\frac 1 2}\|\nabla
(u(\tau)-v(\tau))\|_{L^p(\R^d)} \Big),
\end{align*}\normalsize
for $0< t \leq \tilde T$. Thus, for $0<t\leq \tilde T$ we have\small
\begin{align}
&t^{\gamma}\|u(t)-v(t)\|_{L^q(\R^d)} + t^{\frac 1 2}
\|\nabla\left(u(t)-v(t)\right)\|_{L^p(\R^d)}\label{eq:uniqueness_1}\\
&\qquad \leq 2 K C \tilde T^{1-\frac{d}{2p}-\frac 1 2}
\sup_{0< \tau \leq \tilde T}\left(\tau^{\gamma} \|u(\tau) -
v(\tau)\|_{L^q(\R^d)} + \tau^{\frac 1 2}\|\nabla (u(\tau)-v(\tau))\|_{L^p(\R^d)}
\right).\notag
\end{align}\normalsize
In the case $p>d$ we can choose $\tilde T$ small, so that $2 K C
\tilde T^{1-\frac{d}{2p}-\frac 1 2}  < 1$. This implies $u=v$ on
$[0,\tilde T)$. Since $u,v \in C([\eps, T_0);L^q_{\sigma}(\R^d))$
for every $\eps > 0$, the above argument with initial data
$u(\eps)=v(\eps)$ yields that the set $\{t \in (0,T_0): u(t)=v(t)\}$
is open. The continuity of $u,v$ and the connectedness of $(0,T_0)$ imply that $u=v$ on $[0,T_0)$.

Now, it remains to prove the uniqueness in the case $p=d$. Instead of
(\ref{eq:uniqueness_1}) we consider
\begin{align*}
&t^{\gamma}\|u(t)-v(t)\|_{L^q(\R^d)} + t^{\frac 1 2}
\|\nabla\left(u(t)-v(t)\right)\|_{L^p(\R^d)} \\
&\qquad\qquad \leq 2 K C \,\sup_{0< \tau \leq \tilde T}\Big(\|\nabla
(u(\tau)-v(\tau))\|_{L^p(\R^d)} + \|u(\tau) - v(\tau)\|_{L^q(\R^d)} \Big)
\end{align*}
for $0< t \leq \tilde T$. By (\ref{eq:mild_solution_prop3}) the
constant $K:=K(\tilde T)$ tends to zero as $\tilde T \rightarrow 0$.
Thus, we can choose $\tilde T$ small, so that $2 K C < 1$. This
shows $u=v$ on $[0,\tilde T)$. Since $u,v \in C([\tilde T /2,
T_0);L^q_{\sigma}(\R^d))$ for $q>d$ with $u(\tilde T /2) = v(\tilde
T /2)$, the uniqueness in the case $p>d$ implies $u=v$ on $[\tilde T
/2, T_0)$. The proof is hence complete.
\end{proof}
If we assume in addition that $M(t)$ is skew-symmetric for all $t>0$, then we can even expect to obtain a global solution, provided
that $u_0 \in L^d_{\sigma}(\R^d)$ and that $\|u_0\|_{L^d(\R^d)}$ is
sufficiently small.
\begin{prop}
Let $d\geq 2$ and $u_0\in L^d_{\sigma}(\R^d)$. Moreover assume that
$M(t)$ is skew-symmetric for all $t>0$. Then there exists
$\lambda>0$, such that if $\|u_0\|_{L^d(\R^d)} < \lambda$, then the mild
solution $u\in C([0,T_0);L^d_{\sigma}(\R^d))$ obtained in
Proposition \ref{prop:Kato} is global, i.e. we may take $T_0 =
+\infty$.
\end{prop}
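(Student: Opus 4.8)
The plan is to rerun the fixed-point scheme of Proposition~\ref{prop:Kato}, but now on the whole half-line $[0,\infty)$ and in the critical case $p=d$, exploiting the global estimates \eqref{eq:LpLqsmoothing2}--\eqref{eq:GradientEstimate2} that become available once $M(t)$ is skew-symmetric. Fix an exponent $q>d$ and set $\gamma=\frac{d}{2}\big(\frac1d-\frac1q\big)=\frac12-\frac{d}{2q}$. Defining the iterates $u_{j+1}$ exactly as in \eqref{eq:Iteration}, I would replace the finite-horizon quantities of Proposition~\ref{prop:Kato} by their global counterparts
\begin{equation*}
K_j:=\sup_{0<t<\infty} t^{\gamma}\|u_j(t)\|_{L^q(\R^d)},\qquad
K_j':=\sup_{0<t<\infty} t^{\frac12}\|\nabla u_j(t)\|_{L^d(\R^d)},
\end{equation*}
and $R_j:=\max\{K_j,K_j'\}$. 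The point of passing to the skew-symmetric setting is that \eqref{eq:LpLqsmoothing2} and \eqref{eq:GradientEstimate2} hold for \emph{all} $0\le s<t$ with one single constant, so that $K_1,K_1'\le C\|u_0\|_{L^d(\R^d)}$ with $C$ independent of any time horizon; in particular $R_1\le C\|u_0\|_{L^d(\R^d)}$.

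The decisive feature of the borderline exponent $p=d$ is that the time integrals governing the recursion are scale invariant. Indeed, carrying out the estimates \eqref{eq:proofKato3}--\eqref{eq:proofKato4} with $p=d$ leads to the two integrals
\begin{equation*}
\int_0^t (t-s)^{-\frac12}\,s^{-\gamma-\frac12}\,\d s
=C\,t^{-\gamma},\qquad
\int_0^t (t-s)^{-\frac{d}{2q}-\frac12}\,s^{-\gamma-\frac12}\,\d s
=C\,t^{-\frac12},
\end{equation*}
where in both cases $C$ is a pure Beta-function constant (the exponents lie in $(0,1)$ precisely because $q>d$). After multiplying by $t^{\gamma}$ and $t^{\frac12}$ respectively and taking $\sup_{0<t<\infty}$, the powers of $t$ cancel and I obtain $K_{j+1}\le K_1+C_1K_jK_j'$ and $K_{j+1}'\le K_1'+C_2K_jK_j'$ with $C_1,C_2$ now \emph{independent of the time interval}. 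This is exactly the mechanism that converts the local bound of Proposition~\ref{prop:Kato} into a global one.

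From here the argument of Proposition~\ref{prop:Kato} goes through verbatim on $[0,\infty)$: the recursion yields $R_{j+1}\le R_1+\delta R_j^2$ with $\delta\ge1$ independent of time, so choosing $\lambda$ with $C\lambda\le\frac{1}{6\delta}$ guarantees $R_1\le\frac{1}{6\delta}$ whenever $\|u_0\|_{L^d(\R^d)}<\lambda$, and inductively $R_j\le 2R_1$ for all $j$. The same bilinear estimates give difference bounds of the form $L_j,L_j'\le 2C_3R_1(L_{j-1}+L_{j-1}')$, so for $R_1$ small the maps $t\mapsto t^{\gamma}u_j(t)$ and $t\mapsto t^{\gamma+\frac12}\nabla u_j(t)$ are Cauchy in $C([0,\infty);L^q_\sigma(\R^d))$ and $C([0,\infty);L^q(\R^d)^{d\times d})$. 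Their limit $u$ solves \eqref{eq:Duhamel} on all of $[0,\infty)$, and the continuity and $L^d$-boundedness of $u$, together with uniqueness, follow as in Proposition~\ref{prop:Kato}; since the constructed solution coincides with the one of Proposition~\ref{prop:Kato} on its interval of existence, we may take $T_0=+\infty$.

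The main obstacle, and the heart of the matter, is verifying that the recursion constants are genuinely independent of the time horizon. This hinges on the conjunction of two facts: skew-symmetry of $M(t)$, which forces $\|U(t,s)\|=1$ and $Q_{t,s}=(t-s)I$ and thereby makes the smoothing constants in \eqref{eq:LpLqsmoothing2}--\eqref{eq:GradientEstimate2} uniform over unbounded time; and the criticality $p=d$, which renders the Duhamel time integrals scale invariant so that the $t$-weights cancel exactly after taking the global supremum. If either ingredient were absent, namely for a non-critical $p>d$, or without the global estimates of part~(b), the constants $C_1,C_2$ would pick up a positive power of the time horizon and the fixed point could only be closed locally, as in Proposition~\ref{prop:Kato}.
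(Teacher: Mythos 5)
Your proposal is correct and is essentially the argument the paper intends: the paper's own ``proof'' is a single remark saying to use the smoothing estimates and repeat the argumentation of Proposition \ref{prop:Kato}, and you have supplied exactly that elaboration, correctly identifying the two mechanisms that make it work globally --- the time-uniform constants in \eqref{eq:LpLqsmoothing2}--\eqref{eq:GradientEstimate2} coming from skew-symmetry (so that $\|U(t,s)\|=1$ and $Q_{t,s}=(t-s)I$), and the exact cancellation of the $t$-powers in the Duhamel integrals at the critical exponent $p=d$, which replaces the smallness of $T_0$ by smallness of $\|u_0\|_{L^d(\R^d)}$. The only discrepancy is that the paper's remark cites the finite-horizon estimates \eqref{eq:LpLqsmoothing} and \eqref{eq:GradientEstimate} (evidently a slip, since those constants depend on $T$), whereas you correctly invoke the global part~(b) estimates.
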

For the proof one can use the estimates (\ref{eq:LpLqsmoothing}) and
(\ref{eq:GradientEstimate}) and the same argumentation as above.

\end{document}